\newtheorem{thm}{Theorem}[section]
\newtheorem{prop}[thm]{Proposition}
\newtheorem{lem}[thm]{Lemma}
\newtheorem{cor}[thm]{Corollary}
\theoremstyle{definition}
\newtheorem{question}[thm]{Question}
\newtheorem{rem}[thm]{Remark}
\numberwithin{equation}{section}
\DeclareFontFamily{OT1}{rsfs}{}
\DeclareFontShape{OT1}{rsfs}{n}{it}{<-> rsfs10}{}
\DeclareMathAlphabet{\curly}{OT1}{rsfs}{n}{it}
\newcommand{\NN}{\mathbb{N}} 
\newcommand{\ZZ}{\mathbb{Z}} 
\newcommand{\QQ}{\mathbb{Q}} 
\newcommand{\rpk}{{\mathbb {R}}{{\mathbb {P}}^2}}
\newcommand{\cpk}{{\mathbb {C}}{{\mathbb {P}}^2}}
\newcommand{\cpegy}{{\mathbb {C}}{{\mathbb {P}}^1}}
\newcommand{\cpkk}{{\overline {{\mathbb C}{\mathbb P}^2}}}
\newcommand{\be}{\begin{eqnarray*}}
\newcommand{\ee}{\end{eqnarray*}}
\newcommand{\bne}[1]{\begin{eqnarray} \label{#1} }
\newcommand{\ene}{\end{eqnarray}}
\newcommand{\bp}{
   \arraycolsep=6pt
   \def\arraystretch{1}
   \begin{pmatrix}
}
\newcommand{\ep}{
   \end{pmatrix}
   \arraycolsep=2pt
   \def\arraystretch{1.2}
}
\def\arraystretch{1.2}
\begin{document}

\author[G. Mati\'c]{Gordana Mati\'c}
\address{Department of Mathematics, University of Georgia, Athens}
\email{gordana@math.uga.edu}

\author[F. \"Ozt\"urk]{Fer\.{i}t \"Ozt\"urk}
\address{Department of Mathematics, Bo\u{g}azi\c{c}i University}
\email{ferit.ozturk@boun.edu.tr}

\author[J. Reyes]{Javier Reyes}
\address{Department of Mathematics, University of Maryland}
\email{jereyes@umd.edu}

\author[A. I. Stipsicz]{Andr\'as I. Stipsicz}
\address{HUN-REN R\'enyi Institute of Mathematics, Budapest}
\email{stipsicz.andras@renyi.hu}

\author[G. Urz\'ua]{Giancarlo Urz\'ua}
\address{Facultad de Matem\'aticas, Pontificia Universidad Cat\'olica de Chile}
\email{gianurzua@gmail.com}

\date{\today}
\title[An exotic $5\rpk$ in the 4-sphere]{An exotic $5\rpk$ in the 4-sphere}

\begin{abstract}
  We show an example of an embedded copy of $5\rpk$ in the four-sphere
  $S^4$ which is topologically standard but smoothly knotted,
  i.e. smoothly not isotopic
  to the standard embedding.
\end{abstract}

\maketitle

\section{Introduction}
\label{sec:intro}

The real projective plane $\rpk$ can be embedded in the four-sphere
$S^4$ by simply capping off a M\"obius band in $S^3$ having unknotted
boundary, with a slice disk of its boundary unknot. Indeed, in this
way we get two different embeddings, as there are two M\"obius bands
with the property above. The two $\rpk$'s will have normal Euler
numbers $-2$ and $2$. Another way to present these embeddings is to
take the quotient of the complex projective plane $\cpk$ (or $\cpkk$)
with the involution given by complex conjugation in homogeneous
coordinates~\cite{Massey}.  The quotient is
$S^4$, and the fixed point set is the real projective
plane $\rpk\subset \cpk$. Connect summing the two embeddings above in
$S^4$ we get embeddings of $n\rpk$ into $S^4$ with Euler numbers $-2n,
-2n+4, \ldots , 2n-4, 2n$. These embeddings are the \emph{standard
  embeddings} with the appropriate Euler numbers. We note that the
complement of the standardly embedded $n\rpk$ has fundamental
group isomorphic to $\ZZ/2\ZZ =\ZZ _2$.

In \cite{FKV1, FKV2} S. Finashin and M. Kreck showed that the 10-fold connected sum
$10\rpk$ admits (indeed, infinitely many) embeddings with Euler number
$\pm 16$ into $S^4$ which are topologically isotopic,
but smoothly not.  The argument utilized the discovery of exotic
smooth structures on the four-manifold $\cpk \# 9\cpkk$ by
S.~Donaldson~\cite{Don}. This construction was improved by S.~Finashin
in \cite{F} to exotic embeddings of $6\rpk$ with Euler number $\pm 8$
--- the basis of the argument was similar to the one above: in this
case, the existence of exotic smooth structures on $\cpk \# 5\cpkk$
that were found in \cite{PSS}. Further exotic embeddings of
$n\rpk$ with $n=6,7,8,9$ and other Euler numbers were found by
A.~Havens~\cite{Havens}.

The common theme of the proofs of the results mentioned above was that
the exotic smooth manifolds can be presented as double branched covers
of $S^4$ branched along a connected sum of some number of copies of
$\rpk$; the fact that the branched covers are not diffeomorphic then
shows the non-existence of smooth isotopies (or even diffeomorphisms
of pairs) between the branching loci.  This method constructed
infinitely many pairwise smoothly non-isotopic embeddings. An
adaptation of the method of M.~Kreck~\cite{Kreck} then verified that
there are at most finitely many topological (locally flat) isotopy
classes, providing infinitely many topologically isotopic, smoothly
distinct smooth embeddings.

In this paper we show
\begin{thm}\label{thm:main}
  There is an embedding $\iota \colon 5\rpk \hookrightarrow S^4$ with
  Euler number $6$ which is
  smoothly non-isotopic to the standard embedding, such that
  the complement $S^4\setminus \iota (5\rpk)$ has fundamental group
  isomorphic to $\ZZ _2=\ZZ/2\ZZ$.
\end{thm}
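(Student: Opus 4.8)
The plan is to realize the exotic embedding as the fixed-point set of a smooth involution on an exotic copy of $\cpk\#4\cpkk$, following the common theme recalled in the introduction. First note that the double cover of $S^4$ branched along the \emph{standard} $5\rpk$ of normal Euler number $6$ is the standard $\cpk\#4\cpkk$: the standard $\rpk$ in $\cpk$ (resp.\ in $\cpkk$) is the fixed set of complex conjugation with quotient $S^4$, double branched covers are additive under connected sum, and five pieces with normal Euler numbers $+2,+2,+2,+2,-2$ (total $6$) have double branched cover the corresponding connected sum of four copies of $\cpkk$ and one of $\cpk$, i.e.\ $\cpk\#4\cpkk$ up to orientation convention. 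Conversely, a smooth involution $\tau$ on a closed $4$-manifold $M$ with $2$-dimensional fixed set $F$ and $M/\tau\cong S^4$ exhibits $M$ as the double cover of $S^4$ branched along the embedded surface $F\subset S^4$. Hence it suffices to build a pair $(M,\tau)$ with $M$ homeomorphic but \emph{not} diffeomorphic to $\cpk\#4\cpkk$, with $\mathrm{Fix}(\tau)\cong 5\rpk$ of normal Euler number $6$, and with $\pi_1(M\setminus\nu\,\mathrm{Fix}(\tau))=1$. Indeed the last condition forces $\pi_1(S^4\setminus\nu F)=\ZZ_2$ via the exact sequence of the double cover $M\setminus\nu F\to S^4\setminus\nu F$ (for the standard model the complement already has $\pi_1=\ZZ_2$, so its connected double cover is simply connected), and an ambient isotopy carrying $F$ to the standard $5\rpk$ would give a diffeomorphism of pairs, which lifts to a diffeomorphism $M\cong\cpk\#4\cpkk$, contradicting the choice of $M$.

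The pair $(M,\tau)$ is produced by an \emph{equivariant} modification of the standard model $(M_0,\tau_0)=(\cpk\#4\cpkk,\ \text{real structure})$, whose fixed set $F_0$ is the standard $5\rpk$. Inside $M_0\setminus F_0$ one locates a configuration on which $\tau_0$ acts --- a $\tau_0$-invariant piece, or a piece together with a disjoint $\tau_0$-translate --- supporting a standard exoticity-producing operation (a Fintushel--Stern knot surgery along a nullhomologous torus, a logarithmic transformation, or a rational blowdown of a linear chain of spheres), and one performs this operation $\tau_0$-equivariantly. It is arranged so that (i) the induced operation in the quotient $S^4$ again yields $S^4$, so $M/\tau\cong S^4$; (ii) it is homologically mild and supported in a simply connected region whose complement is simply connected, so that $M$ remains homeomorphic to $\cpk\#4\cpkk$ by Freedman's theorem and $\pi_1(M\setminus\nu\,\mathrm{Fix}(\tau))$ remains trivial by van Kampen; and (iii) it changes the diffeomorphism type. (A real algebro-geometric incarnation is equally natural: a real one-parameter degeneration whose general fibre is the standard $\cpk\#4\cpkk$ and whose central fibre is a singular real rational surface with a $\QQ$-Gorenstein smoothing that is an exotic surface, all compatible with the $S^4$ quotient.)

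That $M$ is not diffeomorphic to $\cpk\#4\cpkk$ --- the one point requiring genuine work --- is a Seiberg--Witten argument in the $b_2^+=1$ setting. Since $\cpk\#4\cpkk$ is rational it carries a metric of positive scalar curvature, so its small-perturbation Seiberg--Witten invariants vanish in the relevant chamber and it is non-minimal; the operation above is arranged so that $M$ is instead a minimal symplectic $4$-manifold (each cut-and-paste move used has a symplectic model and minimality is achieved or preserved), so $M$ has a nonzero Seiberg--Witten basic class, namely its canonical class, and cannot be rational. Thus $M$ is an exotic $\cpk\#4\cpkk$, and by the first paragraph the embedding $\iota\colon 5\rpk=\mathrm{Fix}(\tau)\hookrightarrow M/\tau=S^4$ of Euler number $6$ is not smoothly isotopic to the standard embedding. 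Running the construction over an infinite family (for instance knot surgery over knots of distinct Alexander polynomial) in fact yields infinitely many pairwise smoothly non-isotopic such embeddings.

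Finally one checks that $\iota$ is \emph{topologically} standard. The complement $S^4\setminus\nu\,\iota(5\rpk)$ is a compact $4$-manifold with the same boundary as the standard complement (the twisted circle bundle of Euler number $6$ over $5\rpk$), with fundamental group $\ZZ_2$ --- a \emph{good} group in Freedman's sense --- and with the same Kirby--Siebenmann invariant and equivariant intersection data; an adaptation of Kreck's modified surgery argument, as in the works cited above, then shows this complement is homeomorphic rel boundary to the standard one, and hence that $\iota$ is topologically (locally flat) isotopic to the standard embedding. The main obstacle is the equivariant construction of the second paragraph: exhibiting a single cut-and-paste operation that is at once $\tau_0$-equivariant, disjoint from the fixed $5\rpk$, quotient-preserving (still $S^4$), $\pi_1$-preserving, homologically mild enough for Freedman, and genuinely exotic, together with the $b_2^+=1$ Seiberg--Witten bookkeeping needed to certify exoticness.
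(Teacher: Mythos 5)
Your overall strategy coincides with the paper's: realize the embedding as the fixed locus of an involution on a manifold homeomorphic but not diffeomorphic to $\cpk\#4\cpkk$ whose quotient is $S^4$, and derive smooth non-standardness from the exoticness of the double branched cover. The reduction in your first paragraph and the Seiberg--Witten outline in your third are sound in principle. But the proposal has a genuine gap at its center, which you yourself flag in the last sentence: the second paragraph does not construct anything. You posit that ``one locates a configuration'' in $\cpk\#4\cpkk$ supporting an equivariant exoticity-producing operation that is simultaneously disjoint from the standard $5\rpk$, quotient-preserving, $\pi_1$-preserving and homologically mild --- but no such configuration is known in $\cpk\#4\cpkk$ itself, and exhibiting one is precisely the content of the theorem. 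The actual construction does not modify $\cpk\#4\cpkk$ at all: it starts from a real pencil of cubics on $\cpk$, performs $22$ conjugation-equivariant blow-ups at real points to reach $\cpk\#22\cpkk$ (fixed set $\#23\rpk$), locates two disjoint \emph{conjugation-invariant} Wahl chains $C_{65,18}$ and $C_{79,30}$ of real rational curves, and rationally blows both down equivariantly. Crucially these chains are \emph{not} disjoint from the fixed-point set --- each invariant sphere meets it in an equatorial circle, and the blow-down cuts the branch locus from $\#23\rpk$ down to $\#5\rpk$. Your premise that the operation can be supported in the complement of the fixed $5\rpk$ is therefore not just unverified but structurally different from how the known example works.

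Two further points. First, the fundamental group claim cannot be dispatched with ``remains trivial by van Kampen'': because the surgery region meets the branch locus, the paper must run a careful Seifert--Van Kampen argument downstairs in $S^4$, comparing the complement of the pushed-in spanning surface of each $2$-bridge knot $K_{p^2,pq-1}$ with the complement of its ribbon disk, and using explicit M\"obius bands in the branch surface to kill the generators $\alpha\beta$ and $\alpha\beta^{-1}$ of the relevant kernels (Lemmas~\ref{lemma:fundGp2ndRatBlDw} and \ref{lemma:fundGp1stRatBlDw}). Second, your closing claim that knot surgery over an infinite family of knots yields infinitely many such embeddings overreaches: the paper explicitly remarks that it obtains only \emph{one} exotic $5\rpk$ precisely because no equivariant knot surgery is available on this example. (Your fourth paragraph, on topological standardness, addresses the Corollary rather than the Theorem; the paper handles it by citing Conway--Orson--Powell rather than rerunning Kreck's surgery.)
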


{We combine this theorem with a recent result of
A.~Conway, P.~Orson and M.~Powell~\cite[Theorem~A]{COP},
stating that a locally flat embedding of $n\rpk$
into $S^4$ with the fundamental group of the complement
being $\ZZ _2$ and the Euler number being less than $2n$ in absolute
value is topologically (locally flat) isotopic to the standard embedding.
As a result, we get}

\begin{cor}
  There is an embedding of $5\rpk$ into $S^4$ with
  normal Euler number $6$ which is topologically
  isotopic to the standard embedding, but smoothly non-isotopic to it.
  \end{cor}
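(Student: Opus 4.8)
The plan is to feed the embedding produced by Theorem~\ref{thm:main} directly into the topological classification theorem of Conway, Orson and Powell. First I would fix $\iota\colon 5\rpk\hookrightarrow S^4$ to be an embedding as furnished by Theorem~\ref{thm:main}: it has normal Euler number $6$, its complement has fundamental group $\ZZ_2$, and it is smoothly non-isotopic to the standard embedding of $5\rpk$ with Euler number $6$ (such a standard embedding exists, being one of the connect sums described in the introduction). Since any smooth embedding is in particular locally flat, $\iota$ is a locally flat embedding of $5\rpk$ into $S^4$.

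Next I would verify the numerical hypothesis required by \cite[Theorem~A]{COP}. Here $n=5$, so $2n=10$, and the normal Euler number $6$ satisfies $|6|<10$. Together with the condition $\pi_1\bigl(S^4\setminus \iota(5\rpk)\bigr)\cong\ZZ_2$ coming from Theorem~\ref{thm:main}, the hypotheses of \cite[Theorem~A]{COP} are met, and that result yields that $\iota$ is topologically (locally flat) isotopic to the standard embedding of $5\rpk$ with Euler number $6$.

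Finally, combining this with the smooth conclusion of Theorem~\ref{thm:main} shows that $\iota$ is topologically isotopic to the standard embedding but smoothly non-isotopic to it, which is exactly the assertion of the corollary. I do not expect any obstacle at this stage: the entire content is carried by the construction underlying Theorem~\ref{thm:main}, and the corollary is a formal consequence of that theorem together with the cited topological classification.
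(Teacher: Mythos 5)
Your argument is exactly the paper's: take the embedding from Theorem~\ref{thm:main}, check that the Euler number $6$ satisfies $|6|<2\cdot 5=10$ and that the complement has fundamental group $\ZZ_2$, and apply \cite[Theorem~A]{COP} to get topological standardness while Theorem~\ref{thm:main} supplies the smooth knottedness. The proposal is correct and matches the paper's proof, just with the hypothesis-checking spelled out a bit more explicitly.
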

\begin{proof}
  According to \cite[Theorem~A]{COP} an embedding of
  $5\rpk$ with Euler number $6$ and complement with fundamental
  group $\ZZ _2$ is topologically isotopic to the standard embedding
  with the same Euler number.
  The corollary then follows from Theorem~\ref{thm:main}.
  \end{proof}

\begin{rem}
  Theorem~A of \cite{COP} indeed implies that all  the embeddings found in
  \cite{F, FKV1, FKV2} are exotic, that is, topologically isotopic
  to the standard embedding,
  but smoothly not isotopic to it.
  \end{rem}

Recently a very interesting preprint \cite{otherpaper} of J.~Miyazawa
appeared on the arXiv, claiming a much stronger result: the existence
of infinitely many exotic embeddings of a single $\rpk$ in $S^4$. The
techniques of the proof, and indeed the embeddings found in
\cite{otherpaper} are, however, very different from the one we discuss
in this paper: Miyazawa's exotic examples are connected sums of the
standard $\rpk$ with knotted spheres, hence the double branched covers
all contain spheres generating the second homology, implying that the
Seiberg-Witten invariants of the branched covers are trivial. In fact,
the proof of the fact that the embeddings are smoothly distinct relies
on a novel invariant (called Real Seiberg-Witten invariant in
\cite{otherpaper}). Indeed, it was shown that the double branched
covers along these smoothly knotted $\rpk$'s are all standard
(diffeomorphic to $\cpk$ with one of its orientations) \cite{maggie},
and the involutions generated by the branched covers are exotic. The
following question is still open:

  \begin{question}
    Is there a smooth exotic $\rpk\subset S^4$ which is not the connected
    sum of the standard $\rpk$ with a knotted sphere?
    \end{question}

The main ingredient of the proof of Theorem~\ref{thm:main}
is the construction of an
exotic smooth structure on
$\cpk \# 4\cpkk$  via the rational blow-down method
in the fashion discussed in \cite{R, RU}.
By choosing the appropriate construction, the implementation of
Finashin's methods from \cite{F} leads to the proof of
Theorem~\ref{thm:main}.  The crucial observation in this construction
is that in the pencil defining the elliptic fibration on $\cpk \# 9 \cpkk$
the generating curves and all the points to be blown up in the further
blow-ups are
real, so that one can perform each blow-up equivariantly.  Moreover,
as in \cite{F}, one can perform the rational blow-downs equivariantly,
thus the resulting four-manifold has a natural involution, with the
fixed point set $\widehat{F}\cong\#5\rpk$ and quotient $S^4$.
\begin{rem}
  Notice that Theorem~\ref{thm:main} provides a single exotic
  embedding of $5\rpk$, while \cite{F} gives infinitely many distinct
  embeddings for $6\rpk$. This difference is due to the fact that at
  the moment we have only one example of an exotic $\cpk\# 4\cpkk$
  with an appropriate involution, while there are infinitely many
  exotic $\cpk\# 5\cpkk$ with this extra structure, constructed by
  knot surgeries which we were unable to perform in an equivariant way
  on our example.
\end{rem}

The paper is organized as follows. We describe the construction of the
exotic smooth structure on $\cpk\# 4\cpkk$ and verify the existence of
an embedding of $5\rpk\subset S^4$ with Euler number equal to 6 in
Section~\ref{sec:construction}.  The proof of
  Theorem~\ref{thm:main} is then concluded by calculating the
  fundamental group of the complement in Section~\ref{sec:fundgroup} and
  proving exoticness of the smooth structure on
  $\cpk \# 4\cpkk$ in the subsequent sections.  In
  Section~\ref{sec:SW} we give a proof using the Seiberg-Witten
  invariants, while Section~\ref{sec:AG} contains a proof with a more
  algebro-geometric flavor. 

{\bf Acknowledgements:} The core of this work was carried out while GM
and F\"O visited the Erd\H{o}s Center, during the special semester
{\em Singularities and Low Dimensional Topology} in Spring 2023.  GM
and F\"O would like to thank the Erd\H{o}s Center for the great
working environment and hospitality.  GM was partially supported by  NSF award DMS1612036. AS was partially supported by
NKFIH grant \emph{\'Elvonal} KKP144148 and by ERC Advanced Grant
KnotSurf4d.  GU was supported by the
FONDECYT regular grant 1230065 and a Marie S. Curie FCFP fellowship.

\section{Construction of the exotic four-manifold}
\label{sec:construction}

{ As we said in the introduction, we will construct our example via
  first constructing an exotic $\cpk \# 4\cpkk$ by performing a
  rational blow-down of an appropriate blow-up of $\cpk$, then acting
  by an involution which fixes an embedded $ \# 5\rpk$.  Similar
  further examples of exotic structures on $\cpk \# 4\cpkk$ are
  presented in \cite{R, RU}, and all of these are obtained by
  rationally blowing-down Wahl chains in appropriate blowups of $\cpk$
  that are found as results of a computer
  search. \footnote{\url{https://github.com/jereyes4/Wahl_Chains/}}}

The example we consider here, different from any of those in  \cite{R, RU},
has the added property which we need for our construction: that all
operations are invariant under conjugation.
This example 
might, of course, not be the only such that can be found using a similar computer search.

In $\cpk$ consider the lines
\begin{itemize}
\item $X=\{x=0\}$, $Y=\{y=0\}$, $Z=\{z=0\}$,
\item $A=\{y+z=0\}$, $B=\{x+z=0\}$, $C=\{x+y=0\}$,
\item $H=\{x-y=0\}$, $L=\{x+y+z=0\}$.
\end{itemize}

The pair of singular cubics $F_{\infty}=X\cup Y\cup Z$
and $F_{9}=A\cup B\cup C$ determine the pencil $\{ F_t\}_{t\in \cpegy}$
of cubic curves given by the cubic equations
\[
(x+y)(x+z)(y+z) + (t-9)xyz =0.
\]

\begin{figure}[htbp]
\begin{center}
\includegraphics[width=8.5cm]{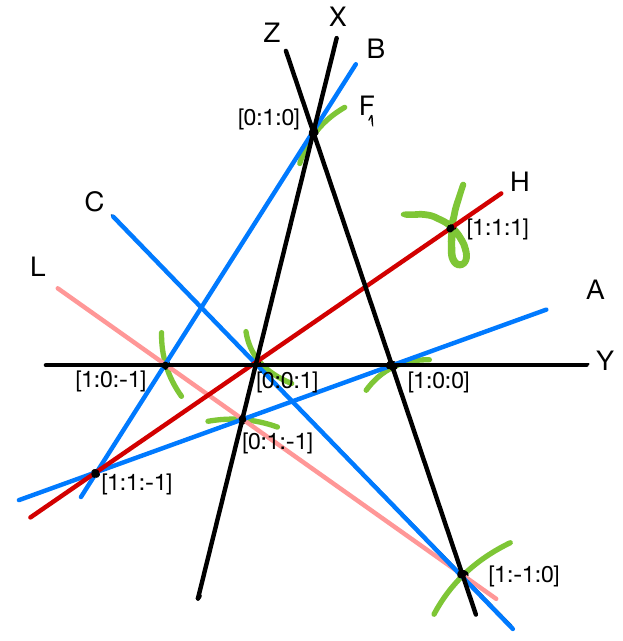}
\caption{$F_{\infty}$ is the black, $F_{9}$ is the blue curve. Part of
  $F_1$ is also shown (in green). The red line $H$ provides a
  multisection of the elliptic fibration we get after blowing up the
  base points of the pencil.}
\label{konf}
\end{center}
\end{figure}

This pencil has two further singular curves (besides $F_9$ and
$F_{\infty}$): $F_{1}$ with a nodal cubic singularity at $[1 : 1 : 1]$ and
$F_{10}$ which is the union of the line $L$ and the conic $$Q=\{xy+xz+yz\}.$$
The pencil has $6$ base points
$[0 : 0 : 1],\ [0 : 1 : 0],\  [1 : 0 : 0],\  [0 : 1 : -1],
\ [1 : 0 : -1],\ [1 : -1 : 0]$ with multiplicities $2,\ 2,\ 2,\ 1,\ 1$ and 1,
respectively.  Note that the curves generating the pencil are given by
polynomials with real coefficients, and all base points as
well as the singular point of $F_{1}$ are real.  As a
consequence, complex conjugation maps  curves in the pencil to
(potentially other) curves in the pencil.
As we will blow up only fixed points of the involution,
this involution will extend to the blown-up surface in a way that the new
exceptional divisors, as well as the strict transforms of invariant
curves will be also invariant under the involution.  Such blow-ups
will be referred to as \emph{equivariant}.

We blow up at the 6 base points 9 times to obtain an 
elliptic fibration $S \to \mathbb{C} \mathbb{P}^1$ with singular
fibers $I_6+I_3+I_2+I_1$ (see also \cite{HKK} or \cite{SSS} for
notation and conventions regarding the singular fibers in an elliptic
fibration).  Using the notation of \cite{R}, the 9 blow-ups, with the
exceptional curves (denoted by $E_i$, $i=1, \ldots , 9$), are
performed at the following intersection points:
\begin{eqnarray*}
& E_1:X\cap Y\cap C \cap H \cap F_1, \qquad E_2: E_1\cap C \cap F_1, \\
& E_3: X\cap Z\cap B \cap F_1, \qquad E_4: E_3 \cap B \cap F_1, \\
& E_5: Y\cap Z \cap A \cap F_1, \qquad E_6: E_5\cap A \cap F_1, \\
& E_7: X\cap A\cap L \cap F_1, \qquad E_8: Y\cap B \cap L \cap F_1, \qquad E_9: Z \cap C \cap L \cap F_1.
\end{eqnarray*}

Here and below (with a slight abuse of notation) we use the same letters for the
strict transforms of the curves after blow-ups. {We blow up equivariantly
13 more times as follows:
\begin{eqnarray*}
& E_{10}: E_1 \cap Y, \qquad
E_{11}: E_3 \cap Z, \qquad  E_{12}:E_4 \cap B,  \qquad E_{13}: E_4 \cap F_1, \\
& E_{14}: E_{13} \cap F_1 \qquad E_{15}: E_5 \cap E_6, \qquad E_{16}: E_9 \cap Z, \qquad E_{17}: E_9 \cap C \\
& E_{18}: A \cap B \cap H, \qquad E_{19}: L \cap H, \qquad E_{20}: E_{19} \cap H, \qquad E_{21}: E_{20} \cap H \\
& E_{22}: F_1 \cap H.
\end{eqnarray*}

Let $W$ be the resulting complex surface, and let $\phi \colon W \to S$ be the corresponding composition of 13 blow-ups.


\begin{figure}[h!]
\begin{center}
\includegraphics[width=13.3cm]{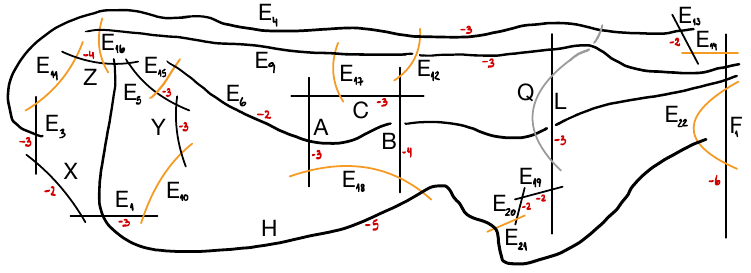}
\caption{The 22-fold equivariant blow-up $W$ of $\cpk$. The four singular fibers of the elliptic fibration $W \to \mathbb{C} \mathbb{P}^1$ are shown, together with the sections $E_4$, $E_9$, $E_6$, and the double-section $H$. The orange curves are $(-1)$-curves. The self-intersections of the curves in the chains $\mathcal{C}_1$ and $\mathcal{C}_2$
are in red.}
\label{22up}
\end{center}
\end{figure}

 \pagebreak

Thus we obtain the following pair of chains
$\mathcal{C}_1,\mathcal{C}_2$ of equivariant curves in $\cpk \# 22
\cpkk$. We include the self-intersection numbers of the spheres.


\[\begin{array}{*{19}c}
    \mathcal{C}_1 & \colon & B & - & C & - & A & - & E_6 & - & F_1 & - & E_9 & - & L & - & E_{19} & - & E_{20}\\
    \text{self-int}& \colon & -4 && -3 && -3 && -2 && -6 && -3 && -3 && -2 && -2\\
\end{array}\]
and
\[\begin{array}{*{19}c}
    \mathcal{C}_2 & \colon & Y & - & E_5 & - & Z & - & H & - & E_1 & - & X & - & E_3 & - & E_4 & - & E_{13}\\
    \text{self-int}& \colon & -3 && -3 && -4 && -5 && -3 && -2 && -3 && -3 && -2\\
\end{array}\]

Note that these chains are Wahl chains (i.e. are in
the family ${\mathcal {G}}$ of \cite{SSW})
and are  equivariant in the
complex surface $(W,c)$ diffeomorphic to $\cpk \# 22\cpkk$.
(Here $c$ is the involution given by the extension of complex conjugation
from $\cpk$ to the 22-fold equivariant blow-up.)
Moreover, with the $(-1)$-curves $E_{21}$ and $E_{14}$ we have longer,
equivariant chains $\mathcal{C}'_1=\mathcal{C}_1 \cup E_{21}$
and $\mathcal{C}'_2=\mathcal{C}_2 \cup E_{14}$.
The fixed point set $F$ of $c$ is diffeomorphic to
$\# 23\rpk$ embedded standardly in $W$,
with Euler number 42.
}

{ Recall that in the rational blow-down operation (introduced by
  Fintushel and Stern~\cite{FSRatBl} and extended to the generality
  used here by J.~Park~\cite{ParkRatBl}) one  considers a collection of
  embedded spheres in a four-manifold intersecting each other
  according to a linear plumbing tree, with self-intersections given
  by the continued fraction coefficients of $-\frac{p^2}{pq-1}$ for
  some relatively prime $p>q>0$.
  The boundary of a tubular
  neighborhood $C_{p,q}$ of this sphere configuration is the lens
  space $L(p^2, pq-1)$, and this lens space bounds a four-manifold
  $B_{p,q}$ with the rational homology of the disk.

  The rational blow-down operation replaces $C_{p,q}$ with $B_{p,q}$.  This
  four-manifold $B_{p,q}$ can be explicitly constructed by the
  following observation: the lens space $L(p^2, pq-1)$ is the double
  branched cover of $S^3$, branched along the two-bridge knot
  $K_{p^2,pq-1}$. The plumbing of annuli and M\"obius bands (with the
  right amount of twisting, dictated by the plumbing tree for
  $C_{p,q}$), provides an embedded surface $\Sigma_{p^2, pq-1}\subset
  S^3$ with boundary the two-bridge knot $K_{p^2,pq-1}$, and by
  pushing the interior of $\Sigma _{p^2, pq-1}$ into the disk $D^4$,
  we get $C_{p,q}$ as the double branched cover of $D^4$, branched
  along this pushed in $\Sigma _{p^2, pq-1}$. As it was shown in
  \cite{Lisca}, the two-bridge knots $K_{p^2,pq-1}$ are all slice (in
  fact, ribbon), and the double branched cover along a particular
  ribbon disk of this knot in $D^4$ provides $B_{p,q}$.}

    For our two chains above the corresponding  neighborhoods are
   $N( \mathcal{C}_1 )=C_{65,18}$ and    $N( \mathcal{C}_2 )=C_{79,30}$ with boundaries
   $L(65^2, 1169)$ and $L(79^2, 2369)$. The significance of these chains is in the following

\begin{thm}\label{thm:nondiffeo}
The symplectic 4-manifold $\widehat{W}$ obtained by rationally blowing
down the two disjoint Wahl chains $\mathcal{C}_1$ and $\mathcal{C}_2$
is an exotic $\cpk \# 4\cpkk$, i.e. it is homeomorphic to $\cpk \#
4\cpkk$ but not diffeomorphic to it.
\end{thm}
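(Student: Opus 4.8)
The plan is to prove the two assertions of Theorem~\ref{thm:nondiffeo} separately: that $\widehat{W}$ is homeomorphic to $\cpk\#4\cpkk$, and that it is not diffeomorphic to it. Throughout I take for granted that $\widehat{W}$ is symplectic, since the rational blow-down of a Wahl chain is a symplectic (indeed $\QQ$-Gorenstein) operation.

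\emph{Homeomorphism type.} First I would record the effect of the two rational blow-downs on the algebraic topology. From $W\cong\cpk\#22\cpkk$ we have $b_2^+(W)=1$ and $b_2^-(W)=22$. Replacing a negative-definite linear plumbing on $k$ spheres by a rational homology ball leaves $b_2^+$ unchanged and decreases $b_2^-$ by $k$; since $\mathcal{C}_1$ and $\mathcal{C}_2$ each consist of $9$ spheres, $\widehat{W}$ has $b_2^+=1$, $b_2^-=4$, so $\sigma(\widehat{W})=-3$ and $\chi(\widehat{W})=7$, matching $\cpk\#4\cpkk$. Because $-3$ is not divisible by $16$, Rokhlin's theorem shows $\widehat{W}$ is not spin, so its intersection form is odd, indefinite, of rank $5$ and signature $-3$, hence isomorphic to $\langle 1\rangle\oplus 4\langle -1\rangle$. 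It remains to check $\pi_1(\widehat{W})=1$: $W$ is simply connected, so a van Kampen argument (as in \cite{FSRatBl,ParkRatBl}) presents $\pi_1(\widehat{W})$ as a quotient of $\pi_1(B_{65,18})\ast\pi_1(B_{79,30})=\ZZ/65\ast\ZZ/79$ in which the generator of each factor is killed, because the meridian of each chain is nullhomotopic in $W\setminus(\mathcal{C}_1\sqcup\mathcal{C}_2)$ --- this is the role of the $(-1)$-curves $E_{14},E_{21}$ (giving the extended chains $\mathcal{C}'_1,\mathcal{C}'_2$) together with the remaining exceptional curves meeting the chains. Equivalently, $\widehat{W}$ is a $\QQ$-Gorenstein smoothing of the simply connected projective surface obtained from $W$ by contracting the two chains, and is therefore simply connected. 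Freedman's classification then yields $\widehat{W}\cong\cpk\#4\cpkk$ topologically.

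\emph{Non-diffeomorphism.} Let $X$ be the normal projective surface obtained from $W$ by contracting $\mathcal{C}_1$ and $\mathcal{C}_2$ to Wahl singularities, so that $\widehat{W}$ is a $\QQ$-Gorenstein smoothing of $X$ and $K_X^2=K_{\widehat{W}}^2=2\chi(\widehat{W})+3\sigma(\widehat{W})=5>0$. The crucial point --- and the reason the configuration had to be located by a computer search, in the spirit of \cite{R,RU} --- is that the two chains absorb the exceptional curves of $\phi$ in such a way that $K_X$ is nef; hence $X$ is a minimal surface of general type with only Wahl singularities, and therefore $\widehat{W}$ is a minimal symplectic four-manifold of non-negative Kodaira dimension (one may also try to verify directly that $\widehat{W}$ contains no symplectically embedded $(-1)$-sphere). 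In particular $b_2(\widehat{W})=5>2$ forces $\widehat{W}$ to be neither rational nor ruled. On the other hand $\cpk\#4\cpkk$ is a rational surface, i.e. has Kodaira dimension $-\infty$, and by the Seiberg--Witten--theoretic results of Taubes and Li--Liu --- which also underlie Section~\ref{sec:SW} --- any symplectic structure on the smooth manifold $\cpk\#4\cpkk$ must be rational or ruled; concretely, $\widehat{W}$ has a non-vanishing Seiberg--Witten invariant attached to its canonical class, whereas the rational surface $\cpk\#4\cpkk$ has none, in the sense appropriate to $b^+=1$ manifolds. Hence $\widehat{W}$ is not diffeomorphic to $\cpk\#4\cpkk$; together with the homeomorphism above this proves Theorem~\ref{thm:nondiffeo}, and Section~\ref{sec:AG} gives an independent algebro-geometric proof by showing directly that $X$, hence $\widehat{W}$, is of general type. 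The main obstacle in this plan is the minimality of $\widehat{W}$ --- equivalently the nefness of $K_X$: in contrast to the purely topological part, it depends delicately on the exact configuration and is where the substance of the theorem lies. A secondary technical point is the care needed to run Seiberg--Witten theory when $b^+=1$, where one must keep track of chambers and the wall-crossing formula.
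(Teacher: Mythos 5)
Your overall strategy matches the paper's: simple connectivity plus Freedman (and the parity of the intersection form) for the homeomorphism type, and minimality of the symplectic four-manifold $\widehat{W}$ --- equivalently positivity of the canonical class of the contracted surface $W'$, or a nonvanishing Seiberg--Witten invariant --- for the non-diffeomorphism. The paper in fact gives both proofs of the second half (Section~\ref{sec:SW} and Section~\ref{sec:AG}), and your main line is the algebro-geometric one.

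One genuine error needs correcting: you assert, twice, that $\widehat{W}$ is a $\QQ$-Gorenstein smoothing of the contracted surface $W'$. This is not known. The paper's closing remark computes $H^2(W',T_{W'})=\CC^2$, so the local-to-global obstruction need not vanish, and whether $W'$ smooths is explicitly left open (a smoothing would yield a simply connected complex surface of general type with $p_g=0$ and $K^2=5$, of which no examples are known). The rational blow-down is a smooth/symplectic surgery that does not presuppose a complex smoothing. Two of your steps lean on this claim. First, your ``equivalently'' justification of $\pi_1(\widehat{W})=1$ via the smoothing does not stand; the Seifert--Van Kampen argument you sketch first is the right one, and the mechanism that actually kills the meridians in $W_0$ is that, via the $(-1)$-curves $E_{14}$ and $E_{21}$ and Mumford's description of the link groups, each meridian is conjugate to an element of the other link group, hence has order dividing both $65^2$ and $79^2$, which are coprime. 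Second, minimality of $\widehat{W}$ cannot be deduced from ``$\widehat{W}$ deforms to the general-type surface $X$''; it comes from \cite[Theorem~2.3]{RU}, which constructs the symplectic form on the rational blow-down directly from an embedding of $W$ by $\sigma^*(NK_{W'})-D$ and requires $K_{W'}$ to be \emph{ample} (strict positivity on every curve, verified via Nakai--Moishezon and the discrepancy computation), not merely nef as you state. With these substitutions --- and the Li--Liu/Taubes step you invoke, which is exactly how \cite[Corollary~2.5]{RU} converts minimality into exoticness --- your argument coincides with the paper's.
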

(In the rest of the paper a hat will always denote the corresponding
object after the blow-down.)

In our example rationally blowing down the two negative definite
chains $\mathcal{C}_1 , \mathcal{C}_2$ starting with the 22-fold
equivariant blow-up $W$ of $\cpk$, a manifold diffeomorphic to $\cpk
\# 22 \cpkk$, produces a manifold $\widehat{W}=W_0 \cup (B_{65,18}\cup
B_{79,30}))$ with $b_2^+=1$ and $b_2^-=4$, were $W_0= W \setminus
(\mathcal{C}_1 \cup \mathcal{C}_2))$.  Using the Seifert-Van Kampen
theorem it is not hard to show that

\begin{prop}
  The four-manifold
$\widehat{W}$ is simply
  connected.
\end{prop}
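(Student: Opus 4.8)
The plan is to apply the Seifert–Van Kampen theorem to the decomposition $\widehat{W}=W_0\cup(B_{65,18}\cup B_{79,30})$, where $W_0=W\setminus(\mathcal{C}_1\cup\mathcal{C}_2)$ and the $B_{p,q}$ are glued in along the lens space boundaries $L(65^2,1169)$ and $L(79^2,2369)$. First I would establish that $W_0$ is simply connected. Since $W$ is diffeomorphic to $\cpk\#22\cpkk$, it is simply connected, and $W_0$ is obtained from $W$ by removing open tubular neighborhoods of the two plumbed sphere configurations $\mathcal{C}_1,\mathcal{C}_2$. Removing a codimension-$2$ submanifold (the spheres in the chain) can only kill loops, not create them, so a Van Kampen argument comparing $W$ with $W_0$ and the tubular neighborhoods $N(\mathcal{C}_i)$ shows $\pi_1(W_0)$ is a quotient of $\pi_1(W)=1$, hence trivial. (Concretely: $W = W_0 \cup N(\mathcal{C}_1)\cup N(\mathcal{C}_2)$, each $N(\mathcal{C}_i)$ is simply connected since its a plumbing of disk bundles over simply connected base along a tree, and the intersections $W_0\cap N(\mathcal{C}_i)=\partial N(\mathcal{C}_i)$ are connected.)

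Next, the key input is that $H_1(B_{p,q};\ZZ)=0$ and, in fact, the inclusion-induced map $\pi_1(\partial B_{p,q})\to\pi_1(B_{p,q})$ is surjective with $\pi_1(B_{p,q})$ finite cyclic; more precisely $B_{p,q}$ is a rational homology ball, so $H_1(B_{p,q};\ZZ)$ is finite, and one knows $\pi_1(B_{p,q})$ is a quotient of $\pi_1(L(p^2,pq-1))=\ZZ/p^2\ZZ$. I would then run Van Kampen on $\widehat{W}=W_0\cup_{\partial N(\mathcal{C}_1)} B_{65,18}\cup_{\partial N(\mathcal{C}_2)} B_{79,30}$: the groups $\pi_1(L(65^2,1169))$ and $\pi_1(L(79^2,2369))$ are generated by loops that, viewed in $W_0$, are already nullhomotopic (because $\pi_1(W_0)=1$), so all generators coming from the $B_{p,q}$ pieces die, and since $\pi_1(W_0)=1$ as well, the amalgamated product collapses to the trivial group. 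Thus $\pi_1(\widehat{W})=1$.

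The only genuine subtlety — and the step I expect to be the main obstacle — is verifying that the boundary lens space of each chain neighborhood has fundamental group surjecting onto $\pi_1(W_0)$ via a map that factors through the trivial group, i.e. making sure no "new" generators of $\pi_1(\widehat{W})$ are introduced from the gluing region that are not already killed. This is where one uses that $\pi_1(W_0)=1$ crucially, rather than merely $\pi_1(W_0)$ being, say, cyclic: if $\pi_1(W_0)$ were nontrivial one would have to track how the meridional/longitudinal generators of $\pi_1(L(p^2,pq-1))$ map into it. Since we have already shown $\pi_1(W_0)=1$, the argument is clean. An alternative, essentially equivalent route avoiding any delicate generator bookkeeping is to instead note that rationally blowing down replaces a simply connected piece (the neighborhood $C_{p,q}$ of a chain of spheres with simply connected base) by the rational homology ball $B_{p,q}$ whose fundamental group is generated by the image of $\pi_1$ of the common boundary lens space; combined with simple connectivity of $W_0$ and Van Kampen, this forces $\pi_1(\widehat{W})=1$. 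Either way the proof is short once $\pi_1(W_0)=1$ is in hand, which is the content worth spelling out carefully.
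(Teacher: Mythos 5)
Your overall Van Kampen skeleton (decompose $\widehat{W}$ as $W_0\cup B_{65,18}\cup B_{79,30}$, use that $\pi_1$ of each boundary lens space surjects onto $\pi_1(B_{p,q})$, and reduce everything to $\pi_1(W_0)$) matches the paper's, and the final step is fine \emph{granted} $\pi_1(W_0)=1$. The gap is in how you establish $\pi_1(W_0)=1$: the justification you give is exactly backwards. Removing a codimension-two submanifold does not ``only kill loops'' --- it \emph{creates} them. For $\Sigma\subset X$ of codimension two, the inclusion $X\setminus\nu\Sigma\hookrightarrow X$ induces a \emph{surjection} $\pi_1(X\setminus\nu\Sigma)\twoheadrightarrow\pi_1(X)$ whose kernel is normally generated by meridians of $\Sigma$; it is not injective in general (the complement of a smooth conic in $\cpk$ has fundamental group $\ZZ_2$ although $\cpk$ is simply connected, and complements of knotted $2$-spheres in $S^4$ can have complicated groups). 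Running Seifert--Van Kampen correctly on $W=W_0\cup N(\mathcal{C}_1)\cup N(\mathcal{C}_2)$, what you actually obtain is that $\pi_1(W_0)$ modulo the normal closure of the images of $\pi_1(\partial N(\mathcal{C}_i))\cong \ZZ_{65^2},\ \ZZ_{79^2}$ is trivial, i.e.\ $\pi_1(W_0)$ is \emph{normally generated by the meridians} of the chain spheres. That is the starting point of the real proof, not its conclusion.

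Showing that these meridians actually die in $W_0$ is the entire content of the proposition, and it uses specific features of the configuration. The paper's route is: by Mumford's computation, every meridian of a sphere in $\mathcal{C}_i$ is, inside $\pi_1(\partial N(\mathcal{C}_i))$, a power of a single generator ($\alpha$, a meridian of $E_{13}$, resp.\ $\beta$, a meridian of $E_{20}$); the $(-1)$-spheres $E_{14}$ and $E_{21}$ each meet the two chains in one point apiece, so puncturing them at those points yields annuli in $W_0$ exhibiting $\alpha$ as conjugate to (the inverse of) a meridian of $F_1\subset\mathcal{C}_1$ and $\beta$ as conjugate to a meridian of $H\subset\mathcal{C}_2$; hence the order of the image of $\alpha$ in $\pi_1(W_0)$ divides both $79^2$ and $65^2$, which are coprime, so $\alpha$ (and likewise $\beta$) is trivial in $W_0$, and with it all meridians. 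Without an argument of this kind your proof does not go through: if, say, there were a single Wahl chain with no auxiliary sphere meeting it, $\pi_1(W_0)$ could perfectly well be a nontrivial cyclic group, and the rational blow-down would then only be guaranteed to have $\pi_1$ a quotient of $\ZZ_p$.
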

\begin{proof}
  To prove the statement of the proposition, it is enough to show that a loop
$\alpha$ around $E_{13}$, and a loop $\beta$ around $E_{20}$ are
homotopically trivial in $W_0$. This is because these loops generate
the fundamental groups of the links (i.e. the boundaries of the
tubular neighbourhoods of the chains) corresponding to $\mathcal{C}_1$
and $\mathcal{C}_2$ respectively. We can relate via
conjugation the loop $\alpha$ with a loop around $F_1$ using the
2-sphere $E_{14}$, and the loop $\beta$ with a loop around $H$ using
the 2-sphere $E_{21}$. (At this point we use Mumford's computation
\cite{mum} to relate loops around exceptional curves with powers of a
generating loop of the fundamental group of the link.) As the orders
of the fundamental groups of these links are $65^2$ and $79^2$, they
are coprime, and so $\alpha$ and $\beta$ are trivial in $W_0$.
\end{proof}

By Freedman's celebrated classification
result \cite{Fr}, simple connectivity and a straighforward
Euler characteristic and signature calculation shows that

\begin{cor}
$\widehat{W}$ is homeomorphic to $\cpk \# 4\cpkk$.
  \end{cor}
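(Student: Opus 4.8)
The plan is to apply Freedman's classification of closed, simply connected topological $4$-manifolds. We already know from the previous Proposition that $\widehat{W}$ is simply connected, and it is smooth (indeed symplectic) by Theorem~\ref{thm:nondiffeo}, so its Kirby--Siebenmann invariant vanishes. Hence it suffices to identify the intersection form of $\widehat{W}$, and for that I would just track the Euler characteristic and signature through the two rational blow-downs.

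Since $W$ is diffeomorphic to $\cpk \# 22\cpkk$, we have $\chi(W) = 25$ and $\sigma(W) = -21$. Each of the chains $\mathcal{C}_1, \mathcal{C}_2$ consists of $9$ spheres, so its plumbing neighbourhood $C_{p,q}$ satisfies $\chi(C_{p,q}) = 10$ and $\sigma(C_{p,q}) = -9$ (the plumbing form is negative definite), while the rational homology ball $B_{p,q}$ glued in its place has $\chi(B_{p,q}) = 1$ and $\sigma(B_{p,q}) = 0$. Using additivity of $\chi$ (over a $3$-manifold, which has vanishing Euler characteristic) and Novikov additivity of $\sigma$, replacing $C_{p,q}$ by $B_{p,q}$ decreases $\chi$ by $9$ and increases $\sigma$ by $9$ each time; carrying this out for both chains gives $\chi(\widehat{W}) = 25 - 18 = 7$ and $\sigma(\widehat{W}) = -21 + 18 = -3$. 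Together with $b_1(\widehat{W}) = 0$ this yields $b_2^+(\widehat{W}) = 1$ and $b_2^-(\widehat{W}) = 4$.

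Finally, the intersection form $Q_{\widehat{W}}$ is unimodular (as $\widehat{W}$ is closed and oriented), indefinite (as $b_2^{\pm} > 0$), and odd: it cannot be even, since an even unimodular form has signature divisible by $8$, whereas $\sigma(\widehat{W}) = -3$. By the classification of indefinite unimodular forms, $Q_{\widehat{W}} \cong \langle 1 \rangle \oplus 4\langle -1 \rangle$, which is precisely the intersection form of $\cpk \# 4\cpkk$. Freedman's theorem, applied to the two closed, simply connected, smooth (hence Kirby--Siebenmann-trivial) $4$-manifolds $\widehat{W}$ and $\cpk \# 4\cpkk$ with isomorphic intersection forms, then gives the homeomorphism.

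There is no real obstacle here: the substantive point --- that $\widehat{W}$ is simply connected despite the rational blow-downs --- was settled in the Proposition above, and the parity of the intersection form is forced automatically by the signature, so the only thing left is the elementary bookkeeping of $\chi$ and $\sigma$.
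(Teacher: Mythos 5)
Your argument is correct and is exactly the paper's (one-line) proof, spelled out: Freedman's theorem applied using the simple connectivity from the preceding Proposition together with the Euler characteristic and signature bookkeeping through the two rational blow-downs, which the paper also records as $b_2^+=1$, $b_2^-=4$. The arithmetic ($\chi=7$, $\sigma=-3$) and the parity argument (signature not divisible by $8$ forces an odd form) are all correct.
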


We will show that $\widehat{W}$ is exotic in Sections~\ref{sec:SW} and
\ref{sec:AG}.

The crucial fact in our construction is that the rational blow-downs
can be performed in a way that the new manifold $\widehat{W}$ has a
natural involution $\widehat{c}$, with its fixed point set
$\widehat{F}$ diffeomorphic to $\#5\rpk$.
To see this involution, recall that, as branched covers, both
$C_{p,q}$ and $B_{p,q}$ come with an involution; indeed, these
involutions extend the complex conjugation on the complement of
$C_{p,q}$, cf.~\cite[Section~3.1]{F}.

{The quotient of $\cpk \# 22 \cpkk$ by the conjugation action is
  $S^4$, with the fixed point set $\# 23 \rpk$ descending to the
  standardly embedded $\# 23 \rpk$ with normal Euler class 42. The
  neigborhoods $N( \mathcal{C}_1 )=C_{65,18}$ and $N( \mathcal{C}_2
  )=C_{79,30}$ descend under this action to two standard $B^4$. The
  blow-down operation replaces $B^4=N( \mathcal{C}_i)/c$ by
  $B^4=B_{p_i,q_i}/\hat{c}$, hence the quotient under the induced
  action $\hat{c}$ on $\widehat{W}$ is also $S^4$.}

\begin{cor}\label{cor:firstpart}
The embedding of $\iota\colon 5\rpk\to S^4$ found above has Euler
number $6$ and is not smoothly isotopic
to the standard embedding.
\end{cor}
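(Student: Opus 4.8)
The plan is to assemble Corollary~\ref{cor:firstpart} from the pieces already in place in this section, treating it as a bookkeeping statement rather than a new argument. First I would record the normal Euler number. The fixed point set $F\cong\#23\rpk$ in $W$ has normal Euler number $42$ as stated, and each of the two rational blow-downs removes a neighborhood $N(\mathcal C_i)=C_{p_i,q_i}$ whose fixed locus in the quotient picture is a punctured $\#k_i\rpk$ (a plumbing of $9$ M\"obius bands and annuli), replacing it with the fixed locus of $B_{p_i,q_i}/\widehat c$, which is a once-punctured $\rpk$ together with a ribbon disk contribution. So I would check that each chain of length $9$ contributes $9$ copies of $\rpk$ before the blow-down and $1$ copy after, giving $23 - 2\cdot 9 + 2 = 5$ as the count of $\rpk$ summands in $\widehat F$, consistent with the description $\widehat F\cong\#5\rpk$ already asserted. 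For the Euler number I would compute $e(\widehat F)$ from $e(F)=42$ by tracking how the rational blow-down changes the self-intersection of the branch surface: each $C_{p_i,q_i}$ contributes a fixed M\"obius-plumbing whose total normal framing is read off from the chain self-intersections, and $B_{p_i,q_i}$ contributes the framing of the slice surface $\Sigma_{p_i^2,p_iq_i-1}$. The net arithmetic must yield $6$; I would present this as the continued-fraction identity that the choice of the two chains $\mathcal C_1=C_{65,18}$ and $\mathcal C_2=C_{79,30}$ was engineered to satisfy.

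Next I would establish non-isotopy to the standard embedding via the double branched cover. The embedding $\iota\colon 5\rpk\hookrightarrow S^4$ arises, by the construction recalled just above the corollary, as the fixed point set of the involution $\widehat c$ on $\widehat W$ with $\widehat W/\widehat c = S^4$. Hence the double cover of $S^4$ branched along $\iota(5\rpk)$ is diffeomorphic to $\widehat W$. By contrast, the double cover of $S^4$ branched along the \emph{standard} embedding of $5\rpk$ with Euler number $6$ is the standard $\cpk\#4\cpkk$ — this is the content of the remark in the introduction that the conjugation quotient of $\cpk\#n\cpkk$ recovers the standard branched picture, and a signature/Euler-characteristic count pins down which smooth connected sum appears. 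If $\iota$ were smoothly isotopic to the standard embedding, the isotopy would lift to a diffeomorphism of the branched double covers, forcing $\widehat W\cong\cpk\#4\cpkk$, contradicting Theorem~\ref{thm:nondiffeo}. Therefore $\iota$ is not smoothly isotopic to the standard embedding. I would write this implication carefully, noting that it is really a diffeomorphism-of-pairs obstruction, which is stronger than non-isotopy and is exactly what Theorem~\ref{thm:nondiffeo} delivers.

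The main obstacle, and the only place requiring genuine care, is the Euler-number bookkeeping: verifying that the two equivariant rational blow-downs along $\mathcal C_1$ and $\mathcal C_2$ decrease the normal Euler number of the branch surface from $42$ to exactly $6$, rather than to some other value. This reduces to the combinatorics of how the fixed-point M\"obius-band plumbing inside $C_{p,q}$ compares with the fixed-point slice surface inside $B_{p,q}$, i.e. a computation internal to the models of \cite{F} and \cite{Lisca}. I would handle it by invoking the general formula for how rational blow-down along an equivariant Wahl chain of length $\ell$ changes the branch-locus Euler number — the change is $\ell$ copies of $\rpk$ with framings summing to the chain data being traded for one $\rpk$ with framing determined by the two-bridge knot $K_{p^2,pq-1}$ — and then plugging in the explicit self-intersection vectors $(-4,-3,-3,-2,-6,-3,-3,-2,-2)$ and $(-3,-3,-4,-5,-3,-2,-3,-3,-2)$ to confirm the total is $6$. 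Everything else — simple connectivity is not needed here, the homeomorphism type is irrelevant to this particular corollary, and the quotient being $S^4$ has already been argued — so the proof is short once this numerical check is in hand.
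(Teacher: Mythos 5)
Your second paragraph --- the non-isotopy argument --- is exactly the paper's proof: an isotopy of the branch locus to the standard embedding would induce a diffeomorphism of double branched covers $\widehat{W}\cong\cpk\#4\cpkk$, contradicting Theorem~\ref{thm:nondiffeo}. That part is correct and complete.

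The gap is in the first claim, the normal Euler number, which is precisely the part you flag as ``the only place requiring genuine care'' and then do not carry out. You assert that ``the net arithmetic must yield $6$'' and that this is ``the continued-fraction identity that the choice of the two chains was engineered to satisfy,'' but you never state the formula for the relative normal Euler number of the pushed-in nonorientable spanning surface $\Sigma_{p^2,pq-1}\subset D^4$ versus that of the ribbon disk, nor do you plug in the chain data. As written this is circular: the number $6$ is the thing to be proved. The local computation can in principle be done (it is a Gordon--Litherland-type framing comparison), but the paper avoids it entirely by using the $G$-signature theorem globally: for a double branched cover $X\to S^4$ over a surface $F$ one has $e(F)=-2\sigma(X)$, and $\sigma(\widehat{W})=b_2^+-b_2^-=1-4=-3$ gives $e(\widehat{F})=6$ in one line (the same formula with $\sigma(W)=-21$ recovers $e(F)=42$ as a consistency check). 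Similarly, the paper reads off $\widehat{F}\cong\#5\rpk$ from an Euler characteristic count of the cover rather than from local surgery bookkeeping.

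There is also a concrete error in your crosscap count: each $F_i=F\cap V_i$ is a once-punctured $\#9\rpk$, and it is replaced by a ribbon \emph{disk}, which is orientable and contributes no crosscaps --- not ``one copy of $\rpk$ after.'' The correct count is $23-9-9=5$; your expression $23-2\cdot 9+2$ equals $7$, not $5$, so the formula you wrote is both conceptually and arithmetically off even though you land on the right answer. I recommend replacing the first half of your argument with the Euler characteristic and $G$-signature computations on the branched cover, and keeping your second half as is.
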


\begin{proof}
As $\widehat{W}\to S^4$ is a double branched cover, simple Euler
characteristic calculation determines the topology of the branch
locus. Its normal Euler number follows from the formula derived from
the $G$-signature theorem. If the branch set is isotopic to the
standard embedding, then we would find a diffeomorphism between
$\widehat{W}$ and $\cpk \# 4\cpkk$, contradicting
Theorem~\ref{thm:nondiffeo}.
\end{proof}

\begin{rem}\label{rem:symplectic} It was shown in \cite{PS, Sym} that if the surfaces in $C_{p,q}$ are symplectic
in a symplectic four-manifold
(and intersect each other orthogonally with respect to the symplectic
structure) then the result of the rational blow-down admits a symplectic
structure which coincides with the original one outside of a neighborhood
of $C_{p,q}$.
\end{rem}

\section{Fundamental group calculation}
    \label{sec:fundgroup}

    In order to complete the proof of Theorem~\ref{thm:main}, we need
    to determine the fundamental group of the complement of
    $\widehat{F}=\iota (5\rpk)\subset S^4$. This computation will rely
    on the Seifert-Van Kampen theorem. Recall, that if $X=X_1\cup X_2$
    and $X_1\cap X_2$ is path connected, then (after choosing the base
    point in $X_1\cap X_2$) the fundamental group $\pi _1 (X)$ can be
    presented as the amalgamated product $\pi _1(X_1)*_{\pi _1(X_1\cap
      X_2)}\pi _1(X_2)$, with maps $i_j\colon \pi _1 (X_1\cap X_2)\to
    \pi _1(X_j)$ ($j=1,2$) induced by embeddings.  In our argument we
    will use the following principle:

\begin{lem}\label{lem:FungGroupCalc}
      Suppose that for the amalgamated product $A*_QB$ of the groups $A,B$ over
      $Q$ with homomorphisms $\varphi _A\colon Q\to A$ and
      $\varphi _B\colon Q\to B$ we have
      \begin{itemize}
      \item $\varphi _B$ is onto, and
      \item $\ker \varphi _B\leq \ker \varphi _A$.
      \end{itemize}
      Then $A*_QB$ is isomorphic to $A$.
    \end{lem}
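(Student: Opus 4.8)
The plan is to use the universal property of the amalgamated free product directly. Recall that $A *_Q B$ is the pushout of the diagram $A \xleftarrow{\varphi_A} Q \xrightarrow{\varphi_B} B$, so to identify it with $A$ it suffices to exhibit a compatible cocone over this diagram with apex $A$, together with the observation that the canonical map $A \to A *_Q B$ is then an inverse. Concretely, first I would construct a homomorphism $\psi \colon B \to A$ such that $\psi \circ \varphi_B = \varphi_A$. Since $\varphi_B$ is onto, every element of $B$ has the form $\varphi_B(q)$ for some $q \in Q$, and the hypothesis $\ker \varphi_B \le \ker \varphi_A$ guarantees that setting $\psi(\varphi_B(q)) := \varphi_A(q)$ is well-defined: if $\varphi_B(q) = \varphi_B(q')$ then $q^{-1}q' \in \ker \varphi_B \subseteq \ker \varphi_A$, hence $\varphi_A(q) = \varphi_A(q')$. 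One checks $\psi$ is a homomorphism using surjectivity of $\varphi_B$ again.

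With $\psi$ in hand, the pair $(\mathrm{id}_A \colon A \to A,\ \psi \colon B \to A)$ is a cocone: the two composites $Q \to A$ agree, namely $\mathrm{id}_A \circ \varphi_A = \varphi_A$ and $\psi \circ \varphi_B = \varphi_A$. By the universal property of the pushout there is a unique homomorphism $\Phi \colon A *_Q B \to A$ restricting to $\mathrm{id}_A$ on $A$ and to $\psi$ on $B$. Let $j \colon A \to A *_Q B$ be the canonical inclusion of the first factor. Then $\Phi \circ j = \mathrm{id}_A$ by construction. For the other composite, $j \circ \Phi$ is an endomorphism of $A *_Q B$; I would check it equals the identity by verifying it on generators: on the image of $A$ it is the identity since $\Phi \circ j = \mathrm{id}_A$, and on the image of $B$, for any $b = \varphi_B(q)$ we have $\Phi(b) = \psi(b) = \varphi_A(q)$, whose image in $A *_Q B$ equals the image of $\varphi_B(q) = b$ because $\varphi_A(q)$ and $\varphi_B(q)$ are identified in the amalgamated product. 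Hence $j \circ \Phi = \mathrm{id}$, so $\Phi$ and $j$ are mutually inverse isomorphisms and $A *_Q B \cong A$.

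The only genuinely delicate point is the well-definedness of $\psi$, and that is exactly where both hypotheses are used: surjectivity of $\varphi_B$ to know $\psi$ is defined on all of $B$, and the kernel containment to know the value does not depend on the chosen preimage. Everything else is a formal diagram chase with the universal property, or equivalently a direct check using the standard normal-form / presentation description of $A *_Q B$ as $\langle A, B \mid \varphi_A(q) = \varphi_B(q),\ q \in Q\rangle$: in that presentation one simply notes that every generator coming from $B$ is already equal to a generator coming from $A$, so the relations let one rewrite any word entirely in $A$, and no relations among elements of $A$ are introduced beyond those already holding in $A$. I expect no real obstacle; the main thing to be careful about is not to assume $\varphi_A$ is injective or that $Q$ is embedded, since the statement makes no such hypothesis.
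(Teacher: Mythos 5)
Your proposal is correct and is essentially the paper's own argument: your $\psi$ is the paper's retraction $r$ with $\varphi_A = r\circ\varphi_B$, your $\Phi$ is the induced map $A*_QB\to A$, and checking that the canonical map $A\to A*_QB$ is both split injective and hits all generators coming from $B$ is exactly how the paper concludes. No gaps.
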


\begin{proof} Since $\ker\varphi_B
    \leqslant \ker\varphi_A$ and $\varphi_B$ is surjective, there is a
    map $r \colon B \to A$ such that $\varphi_A = r \circ
    \varphi_B$. Together with the identity $A \to A$, they induce a
    map $F \colon A \ast_Q B \to A$ which, when precomposed with the
    natural map $G_A \colon A \to A \ast_Q B$, gives the identity on
    $A$. Therefore $G_A$ is injective, hence to show that $G_A$ is an
    isomorphism, we need to show that it is surjective. Let $G_B
    \colon B \to A \ast_Q B$ be the other natural map, and let $b \in
    B$. Choose $q \in Q$ with $b = \varphi_B(q)$ and let $a =
    \varphi_A(q)$. By construction, $G_A(a) = G_B(b)$, so we deduce
    that all elements in the image of $G_B$ are in the image of
    $G_A$. Since $A \ast_Q B$ is generated by such elements in the
    image of either $G_A$ or $G_B$, it follows that $G_A$ is
    surjective.
\end{proof}

      We will adapt the principle above to our current setting as
      follows.  As discussed in \cite[Section~3]{F}, the quotient
      spaces $W/c$ and $\widehat{W}/\widehat{c}$ are diffeomorphic to
      $S^4$.  Moreover, as we have pointed out above, since
      $F=\mathrm{Fix}(c)$ is standardly embedded in $W$, we have that
      $\pi_1(W/c - F)= \pi _1 (S^4-F)=\ZZ_2$.  To conclude the proof
      of Theorem~\ref{thm:main}, we need to show that
      $\pi_1(S^4-\widehat{F})=\pi_1(S^4-F)$.  Obviously the branching
      surfaces $F$ and $\widehat{F}$ in $S^4$ are the same outside the
      domain of the two rational blow-downs.

To set up notation, let ${U}_1,{U}_2\subset W$ be the equivariant
plumbing neighborhoods (to be blown down rationally) of the chains
$\mathcal{C}_1$ and $\mathcal{C}_2$ respectively, and
set $V_i=U_i/c$.  During rationally
blowing down $\mathcal{C}_i$, ${U}_i$ is replaced with an equivariant
rational homology ball $\widehat{U}_i$ and
$\widehat{V}_i=\widehat{U}_i/\widehat{c}$ will denote the
corresponding 4-ball in $S^4$ ($i=1,2$);
see Figure~\ref{fig:ratbl}. We
recall that $\partial V_i\cap F$ is a 2-bridge knot $K_i$ in $S^3\cong
\partial V_i$ and stays intact after the blow-down.  (These knots can be
determined through the continued fractions given by the framings of
the spheres in ${\mathcal {C}}_i$,
$K_1$ is the 2-bridge knot $K_{65^2, 65\cdot 18-1}$ and $K_2$ is the 2-bridge
knot $K_{79^2, 79\cdot 30-1}$.)
Moreover $\widehat{F}\cap \widehat{V}_i$ is a
slice disk for $K_i$, that is, a smooth 2-disk embedded in the 4-ball
with $\partial (\widehat{F}\cap \widehat{V}_i)=K_i$; indeed, these disks
are ribbon.
\begin{eqnarray*}
& \xymatrix{ W = \cpk \# 22\cpkk \ar[d]_{/c} & \ar@{_{(}->}[l] U_i \ar[d]_{/c} \ar[r]^{r.b.d.} & \widehat{U}_i \ar[d]_{/\widehat{c}}  \ar@{^{(}->}[r] &
\widehat{W} \ar[d]_{/\widehat{c}}\\
F\subset S^4 & \ar@{_{(}->}[l] V_i  \ar[r] & \widehat{V}_i    \ar@{^{(}->}[r]
&  S^4 \supset \widehat{F}_i
}
\end{eqnarray*}

\begin{figure}[h!]
\begin{center}
\includegraphics[width=10cm]{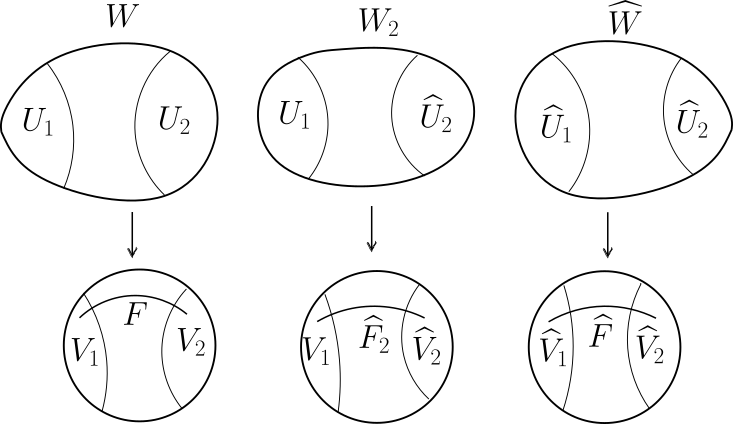}
\caption{Two rational blow-downs on $W$. The manifolds in the bottom row are
all $S^4$, containing the surfaces $F, \widehat{F}_2$ and $\widehat{F}$.}
\label{fig:ratbl}
\end{center}
\end{figure}

Consider the subsurfaces $F_i=F\cap V_i $ of the branch
locus $F$ in $S^4$.  $F_i$ is the plumbing (of the images under
quotient by $c$) of the neighborhoods of the invariant equators of the equivariant
 spheres that appear in the chain $\mathcal{C}_i$.
Since there exist spheres of odd intersection in each chain, $F_i$ is nonorientable
for $i=1,2$.


We will do the calculation in two steps; first we will blow down the
configuration ${\mathcal {C}}_2$ and then ${\mathcal {C}}_1$. In this
way we need to apply the Seifert-Van Kampen theorem several times.
Let us therefore define $W_2$ to be the four-manifold we get from $W$
by blowing down ${\mathcal {C}}_2$, by replacing the neighbourhood
$U_2$ in $W$ with ${\widehat{U}}_2$, and downstairs in $S^4$
replacing  $V_2$ with ${\widehat{V}}_2$. The branch set $F$ is also
changed,  as we replace $F_2$ with the slice disk of the
  slice knot $K_2$; we denote the new branch set by $\widehat{F}_2\subset
  S^4$.

\begin{lem}
\label{lemma:fundGp2ndRatBlDw}
  The fundamental group of the complement $S^4-\widehat{F}_2$ is isomorphic to
  $\ZZ _2$.
  \end{lem}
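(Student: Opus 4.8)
The strategy is to apply the Seifert–Van Kampen theorem to the decomposition $S^4 - \widehat{F}_2 = (S^4 - \widehat{F}_2 - \operatorname{int}\widehat{V}_2) \cup \widehat{V}_2'$, where $\widehat{V}_2' = \widehat{V}_2 - (\widehat{F}_2 \cap \widehat{V}_2)$ is the complement of the ribbon slice disk in the $4$-ball $\widehat{V}_2$, and the first piece is the complement of $F$ in $S^4$ with an open neighborhood of $V_2 \cap F = F_2$ removed. Crucially, the common piece $\partial V_2 - K_2 = S^3 - K_2$ is path connected, and the branch set is unchanged outside $V_2$, so $S^4 - \widehat{F}_2 - \operatorname{int}\widehat{V}_2$ is diffeomorphic to $S^4 - F - \operatorname{int} V_2$, which we will abbreviate $M$. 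Thus $\pi_1(S^4 - \widehat{F}_2) \cong \pi_1(M) *_{\pi_1(S^3 - K_2)} \pi_1(S^3 - K_2 \hookrightarrow \widehat{V}_2')$.

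First I would verify the hypotheses of Lemma~\ref{lem:FungGroupCalc} with $A = \pi_1(M)$, $B = \pi_1(\widehat{V}_2')$, and $Q = \pi_1(S^3 - K_2)$. For the map $\varphi_B \colon Q \to B$: since $\widehat{F}_2 \cap \widehat{V}_2$ is a ribbon disk for the $2$-bridge knot $K_2 = K_{79^2, 79\cdot 30 - 1}$, the complement $\widehat{V}_2'$ is the exterior of a ribbon disk in $D^4$, whose fundamental group is normally generated by a meridian of $K_2$; hence $\varphi_B$ is surjective. For the inclusion $\ker \varphi_B \leq \ker \varphi_A$: this is the place where the arithmetic from the rational blow-down enters. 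I would note that $\varphi_A$ factors as $\pi_1(S^3 - K_2) \to \pi_1(\partial V_2) \to \pi_1(V_2) \to \pi_1(M)$ via the double branched cover picture — $\partial V_2$ is the double branched cover of $S^3$ along $K_2$, namely the lens space $L(79^2, 2369)$, and the abelianization of $\pi_1(\partial V_2 - (\text{branch locus}))$ is controlled by $79^2$. Using Mumford's computation \cite{mum} (as invoked in the proof of the preceding Proposition) to relate meridians around the exceptional curves $E_5, Z, H, E_1, X, E_3, E_4, E_{13}$ and the $(-1)$-curve $E_{14}$, together with the equivariance of the blow-ups, I can express the image of a generating loop of $\pi_1(\partial V_2 - K_2)$-downstairs in terms of the meridian $\mu$ of $F$ in $S^4 - F$, where $\mu$ has order $2$. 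The key point is that the continued-fraction data forces the relevant normal generator of $\ker \varphi_B$ to map into $M$ to something already killed by the relations coming from $\pi_1(S^4 - F) = \ZZ_2$ and the extra $(-1)$-curve $E_{14}$ linking $F_1$.

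The remaining ingredient is the identification $\pi_1(M) \cong \pi_1(S^4 - F) = \ZZ_2$: removing the open tubular neighborhood $V_2$ of the subsurface $F_2 \subset F$ from $S^4 - F$ does not change the fundamental group, because $V_2$ (being a regular neighborhood of a plumbing) deformation retracts onto a $2$-complex of codimension $\geq 2$ in the ambient $4$-sphere, so by general position loops and homotopies can be pushed off it; formally this is another Seifert–Van Kampen argument with $S^4 - F = M \cup (V_2 - F_2)$ along $\partial V_2 - K_2$, in which the $V_2$-side contributes nothing new. Granting $\ker\varphi_B \leq \ker\varphi_A$, Lemma~\ref{lem:FungGroupCalc} then gives $\pi_1(S^4 - \widehat{F}_2) \cong A = \pi_1(M) = \ZZ_2$, as claimed.

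I expect the main obstacle to be the verification of $\ker \varphi_B \leq \ker \varphi_A$: one must track precisely how meridians of the curves in the chain $\mathcal{C}_2$ push forward into $S^4 - F$, control the image of $\pi_1(S^3 - K_2)$ under $\varphi_A$ using the lens space $L(79^2, 2369)$ and Mumford's relations, and confirm that the auxiliary $(-1)$-curve $E_{14}$ (which links $F_1$) provides exactly the relation needed to trivialize the extra generators — while everything else collapses into the order-$2$ meridian of the standardly embedded $F$. This is the analogue for the second blow-down of the coprimality argument ($65^2$ and $79^2$) used in the proof that $\widehat{W}$ is simply connected, transported to the quotient $S^4$.
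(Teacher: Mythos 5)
Your setup coincides with the paper's: the same decomposition along $S^3-K_2$, the same amalgamated product $I_2 *_{G_2}\widehat{J}_2$ with $I_2=\pi_1((S^4-V_2)-F)$, $\widehat{J}_2=\pi_1(\widehat{V}_2-\widehat{F}_2)$, $G_2=\pi_1(S^3-K_2)$, and the same reduction to Lemma~\ref{lem:FungGroupCalc}. However, the one step that carries all the content --- verifying $\ker\varphi_{\widehat{J}_2}\leq\ker\varphi_{I_2}$ --- is not actually carried out, and the route you sketch for it is the wrong tool. The coprimality of $65^2$ and $79^2$, Mumford's relations for loops around exceptional curves, and the lens space $L(79^2,2369)$ are the ingredients of the proof that $\widehat{W}$ itself is simply connected; they do not control $\pi_1$ of the complement of the branch locus downstairs. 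What is actually needed is concrete and different: $G_2$ is generated by the two bridge meridians $\alpha,\beta$; $\ker\varphi_{\widehat{J}_2}$ is normally generated by $\alpha\beta^{-1}$ (seen via $H_1(\widehat{V}_2-\widehat{F}_2)\cong\ZZ$), while $\ker\varphi_{J_2}$ is normally generated by $\alpha\beta^{-1}$ and $\alpha\beta$; and one must exhibit, \emph{in the exterior} $(S^4-V_2)-F$, explicit bands joining the two arcs $l_\alpha,l_\beta$ of $K_2$ whose normal circles realize the free homotopies $\alpha\simeq\beta$ and $\alpha\simeq\beta^{-1}$. This is where the auxiliary $(-1)$-curve $E_{14}$ and the characteristic-set (first Stiefel--Whitney class) computation enter: the fact that $W(\mathcal{C}_2)$ does not extend over $E_{14}\cup\mathcal{C}_2$ is what makes the relevant piece of the branch surface a M\"obius band, producing both relations rather than only one. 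None of this appears in your plan.

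A second, related gap: your claim that $\pi_1(M)\cong\pi_1(S^4-F)=\ZZ_2$ ``by general position'' does not work as stated. The region $V_2$ removed from $S^4-F$ is codimension zero, so loops and homotopies cannot be pushed off it; and the fallback you offer (``the $V_2$-side contributes nothing new'' in $\pi_1(S^4-F)=I_2*_{G_2}J_2$) is precisely the assertion that $\varphi_{J_2}$ is onto and $\ker\varphi_{J_2}\leq\ker\varphi_{I_2}$, i.e.\ a second instance of the very verification you have deferred. The paper handles both amalgamations simultaneously by proving that $\ker\varphi_{I_2}$ contains $\alpha\beta$ and $\alpha\beta^{-1}$ via the band construction, after which Lemma~\ref{lem:FungGroupCalc} identifies both $\pi_1(S^4-F)$ and $\pi_1(S^4-\widehat{F}_2)$ with $I_2$, and $I_2\cong\ZZ_2$ follows from the first identification. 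You should supply the band/M\"obius-band argument (or some substitute for it); without it the proof is not complete.
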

\begin{proof}
  The proof of this statement follows from the argument in
  \cite{F}. For the sake of completeness, we give a detailed account of the argument
  here.

  Let $I_2$ denote the fundamental group $\pi _1 ((S^4-V_2)-F)$,
  $J_2=\pi _1 (V_2-F)$, and ${\widehat{J}}_2=\pi _1
  ({\widehat{V}}_2-\widehat{F}_2)$.  With the notation $G_2=\pi
  _1(S^3-K_2)$ the application of the Seifert-Van Kampen theorem gives
  presentations of the fundamental groups before the blow-down (which
  is $\pi _1(S^4-F$)) and after the blow-down (which is $\pi _1
  (S^4-\widehat{F}_2)$) --- with the base point always chosen in the
  intersection $S^3-K_2$ --- as amalgameted products
  \[
  \pi _1 (S^4-F)\cong I_2*_{G_2}J_2, \qquad \pi _1 (S^4-\widehat{F}_2)\cong
  I_2*_{G_2}{\widehat{J}}_2.
  \]
  The maps $\varphi _{J_2}\colon G_2\to J_2$ and
  $\varphi _{{\widehat{J}}_2}\colon G_2\to {\widehat{J}}_2$ are
  both onto, as $F_2$
  is a pushed-in (nonorientable) spanning surface of $K_2$ and we replace
  it with a ribbon disk of $K_2$ (both the complement of a
  pushed-in surface and a ribbon disk can be
  built on the knot complement by adding only 2-, 3- and 4-handles).

  Note that $G_2$ is generated by two elements $\alpha $ and $\beta$
  (as $K_2$ is a two-bridge knot); we can take these generators as the
  meridians of $F_2$ at the two arcs $l_{\alpha}$ and $l_{\beta}$ of
  the intersection of $K_2$ with a neighborhood $\nu\gamma _0 \subset
  F$ where $\gamma_0$ is the boundary of the disk $E_{14}/c$,
  and is part of $F$.
  Once an
  orientation on $K_2$ is chosen, these normal circles come with an
  orientation.  As $F_2$ is a push-in of a spanning surface
  (so its complement can be built from 0-, 1- and 2-handles only), we have
  that $\alpha \beta ^{-1}$ is in $\ker \varphi _{J_2}$.
    Furthermore, the band $F_2\cap\nu\gamma _0$ gives $\alpha \beta \in
    \ker \varphi _{J_2}$.  As the quotient of $G_2$ with these
    relations is $\ZZ _2$, it follows that $\ker \varphi _{J_2}$
    (as a subgroup of $G_2$)
  is normally generated by these two elements.

  When $\varphi _{{\widehat{J}}_2}$ is composed with the
  abelianization map
\[
 G_2\stackrel{\varphi _{{\widehat{J}}_2}}{\longrightarrow}
 {\widehat{J}_2}\to H_1({\widehat{V}}_2-\widehat{F}_2; \ZZ
 )=H_1(S^3-K_2; \ZZ )=\ZZ,
\]
  we get that the kernel of $\varphi _{{\widehat{J}}_2}$ is contained
  in the normal subgroup generated by $\alpha \beta ^{-1}$.

  In order to apply Lemma~\ref{lem:FungGroupCalc} we need to show that
  $\ker \varphi _{I_2}$ contains both $\alpha\beta$ and $\alpha \beta
  ^{-1}$. Lemma~\ref{lem:FungGroupCalc} then shows that both groups
  $\pi _1 (S^4-F)$ and $ \pi _1 (S^4-\widehat{F}_2)$ are isomorphic to
  $I_2$, and since $\pi _1(S^4-F)\cong \ZZ _2$, this concludes the
  argument.

  { Recall that for ${\mathcal{C}}_1$ and ${\mathcal{C}}_2$ we have
    the extended chains ${\mathcal{C}}_1'$ and ${\mathcal{C}}_2'$ we
    get by adding the classes $E_{21}$ and $E_{14}$, respectively.
    Let $W(\mathcal{C})$ denote a characteristic set for a chain
    $\mathcal{C}$. For $i=1,2$, the sets $W(\mathcal {C}_i)$ and
    $W(\mathcal{C}'_i)$ are unique since the intersection matrices in
    all 4 cases have odd determinants.  The sets are shown below
    marked by a dot $\cdot$ at the top and by a cross $\times$ at the
    bottom for $\mathcal {C}_i$ and $\mathcal {C}'_i$ respectively:
  $$ (i=1): [\dot{4},3,\underset{\times}{3},\dot{2},\underset{\times}{6},\dot{3},3,\underset{\times}{2},2],\underset{\times}{1};
  \qquad (i=2): [\dot{3},\underset{\times}{3},4,\underset{\times}{5},\dot{3},2,\dot{3},\underset{\times}{3},2],\underset{\times}{1}. $$
}
  Let us denote by $S_i$ $(i=1,\ldots, 9)$ the
  $i$-th  sphere (from left to right) in the chain
  ${\mathcal {C}}_2$ and let $\gamma_i=\partial
  (S_i/c)\subset F$ be the image of the
  equatorial circle of $S_i$.  Recall that the 1-dimensional mod-2 cohomology
  class
\[
 PD( \sum_{S_i\in W(\mathcal{C}_2)}
 [\gamma_i])=PD([\gamma_1]+[\gamma_5]+[\gamma_7])
\]
  is the first Stiefel-Whitney class of $F_2$ and hence is the
  obstruction to orientability of $F_2$. (Here $PD(\alpha )$
  denoted the Poincar\'e dual of the homology class $\alpha$.)

  We observe that $W(\mathcal{C}_2)$ does not extend over the longer
  chain $E_{14}\cup\mathcal{C}_2$, or equivalently the obstruction to
  orientability of $F_2$ is not equal to that of the extended surface
  $F'_2=F_2\cup\nu\gamma _0$.  (The obstruction for $F'_2$ is
  $PD([\gamma_0]+[\gamma_2]+[\gamma_4]+[\gamma_8])$.)
  {This implies that $F'_2-\cup_{i=1}^{9}\nu\gamma_i$ is
    nonorientable and hence the surface $\widehat{F}'_2$ is a M\"obius
    band, which we denote by $M$.}  Consider the bands $\sigma_1=M -
  \widehat{V}_2$ and $\sigma_2=M\cap \widehat{V}_2$ which are glued
  together along the intervals $l_{\alpha}$ and $l_{\beta}$.  Choosing
  the base point in the intersection of $\partial V_2-K_2$ and the
  disk $E_{14}/c$, this disk provides the continuous family of arcs
  from the base point to the normal circles of the middle arc of these
  bands. This construction then provide the desired relations
  among the normal circles $\alpha$ and $\beta$, concluding the claim.
\end{proof}

Next we apply the other rational blow-down, blowing down
${\mathcal {C}}_1$ this time. The branch set $\widehat{F}_2$ is modified;
we denote the new branch set by $\widehat{F}$.

\begin{lem}
\label{lemma:fundGp1stRatBlDw}
  The fundamental group of the complement $S^4-\widehat{F}$ is
  isomorphic to $\ZZ _2$.
  \end{lem}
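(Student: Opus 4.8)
The plan is to rerun the proof of Lemma~\ref{lemma:fundGp2ndRatBlDw} almost verbatim, with $\mathcal{C}_1$, the auxiliary $(-1)$-curve $E_{21}$, the $2$-bridge knot $K_1=K_{65^2,\,65\cdot18-1}$ and the branch surface $\widehat{F}_2$ playing the roles that $\mathcal{C}_2$, $E_{14}$, $K_2$ and $F$ played there, and to invoke Lemma~\ref{lemma:fundGp2ndRatBlDw} itself at the very end, where that argument used $\pi_1(S^4-F)\cong\ZZ_2$. Since the two blow-down regions are disjoint, $\widehat{F}_2$ agrees with $F$ near $V_1$ --- so $\widehat{F}_2\cap V_1=F_1=F\cap V_1$ is still a pushed-in nonorientable spanning surface of $K_1$ --- and agrees with $\widehat{F}$ outside $\widehat{V}_1$. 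Decomposing $S^4=(S^4-V_1)\cup V_1$, applying the Seifert-Van Kampen theorem with base point in $(\partial V_1)-K_1\cong S^3-K_1$, and setting $I_1=\pi_1((S^4-V_1)-\widehat{F}_2)$, $J_1=\pi_1(V_1-F_1)$, $\widehat{J}_1=\pi_1(\widehat{V}_1-\widehat{F})$ and $G_1=\pi_1(S^3-K_1)$, we obtain
\[
\pi_1(S^4-\widehat{F}_2)\cong I_1\ast_{G_1}J_1,\qquad
\pi_1(S^4-\widehat{F})\cong I_1\ast_{G_1}\widehat{J}_1 .
\]

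Just as in Lemma~\ref{lemma:fundGp2ndRatBlDw}, the map $\varphi_{J_1}\colon G_1\to J_1$ is onto because $F_1$ is a pushed-in spanning surface, and $\varphi_{\widehat{J}_1}\colon G_1\to\widehat{J}_1$ is onto because $\widehat{F}\cap\widehat{V}_1$ is a ribbon disk of $K_1$; moreover $G_1=\langle\alpha',\beta'\rangle$, with $\alpha',\beta'$ the meridians of $F_1$ along the two arcs in which $K_1$ meets a neighborhood $\nu\gamma_0'$ of the circle $\gamma_0'=\partial(E_{21}/c)\subset F$. Composing $\varphi_{\widehat{J}_1}$ with the abelianization $\widehat{J}_1\to H_1(S^3-K_1;\ZZ)=\ZZ$ shows that $\ker\varphi_{\widehat{J}_1}$ lies in the normal closure of $\alpha'\beta'^{-1}$, whereas the spanning-surface structure of $F_1$ gives $\alpha'\beta'^{-1}\in\ker\varphi_{J_1}$ and the band $F_1\cap\nu\gamma_0'$ gives $\alpha'\beta'\in\ker\varphi_{J_1}$; since the quotient of $G_1$ by these two elements is $\ZZ_2$, they normally generate $\ker\varphi_{J_1}$.

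The heart of the matter is again to show $\alpha'\beta',\alpha'\beta'^{-1}\in\ker\varphi_{I_1}$, using the extended chain $\mathcal{C}_1'=\mathcal{C}_1\cup E_{21}$. Write $S_i$ $(i=1,\dots,9)$ for the $i$-th sphere of $\mathcal{C}_1$ and $\gamma_i=\partial(S_i/c)$. Reading off the $i=1$ data displayed in the proof of Lemma~\ref{lemma:fundGp2ndRatBlDw}, the characteristic set of $\mathcal{C}_1$ consists of $S_1,S_4,S_6$, so the obstruction to orientability of $F_1$ is $PD([\gamma_1]+[\gamma_4]+[\gamma_6])$, while the characteristic set of $\mathcal{C}_1'$ meets $\mathcal{C}_1$ in $S_3,S_5,S_8$, so the obstruction for $F_1'=F_1\cup\nu\gamma_0'$ is $PD([\gamma_0']+[\gamma_3]+[\gamma_5]+[\gamma_8])$. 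These classes differ, so $W(\mathcal{C}_1)$ does not extend over $E_{21}\cup\mathcal{C}_1$; consequently $F_1'-\bigcup_{i=1}^9\nu\gamma_i$ is nonorientable, $\widehat{F}_1'$ is a M\"obius band $M$, and, decomposing $M$ into the two bands $M-\widehat{V}_1$ and $M\cap\widehat{V}_1$ glued along $l_{\alpha'}$ and $l_{\beta'}$ and using the continuous family of arcs from the base point to the normal circles of their middle arcs supplied by the disk $E_{21}/c$, one reads off the relations $\alpha'\beta'=\alpha'\beta'^{-1}=1$ in $I_1$. This construction is localized near $V_1$, which is disjoint from $V_2$, so it is unaffected by the replacement of $F$ by $\widehat{F}_2$ elsewhere.

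It remains to combine the ingredients via Lemma~\ref{lem:FungGroupCalc}. Applied to $I_1\ast_{G_1}J_1$ --- with $\varphi_{J_1}$ onto and $\ker\varphi_{J_1}\leqslant\ker\varphi_{I_1}$ --- and to $I_1\ast_{G_1}\widehat{J}_1$ --- with $\varphi_{\widehat{J}_1}$ onto and $\ker\varphi_{\widehat{J}_1}\leqslant\ker\varphi_{I_1}$ --- it shows that both $\pi_1(S^4-\widehat{F}_2)$ and $\pi_1(S^4-\widehat{F})$ are isomorphic to $I_1$. Since $\pi_1(S^4-\widehat{F}_2)\cong\ZZ_2$ by Lemma~\ref{lemma:fundGp2ndRatBlDw}, we conclude $I_1\cong\ZZ_2$ and hence $\pi_1(S^4-\widehat{F})\cong\ZZ_2$. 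I expect the third paragraph to be the main obstacle: one must verify that the characteristic set of $\mathcal{C}_1$ really fails to extend over $E_{21}\cup\mathcal{C}_1$ (so that $\widehat{F}_1'$ is a M\"obius band and the meridian-sliding trick applies) and check that the disk $E_{21}/c$ genuinely joins the base point on $\partial V_1-K_1$ to the normal circles $\alpha'$ and $\beta'$; the remainder is the formal machinery of Lemmas~\ref{lem:FungGroupCalc} and~\ref{lemma:fundGp2ndRatBlDw}.
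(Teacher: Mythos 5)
Your overall framework matches the paper's: decompose along $\partial V_1$, identify $\ker\varphi_{J_1}$ and $\ker\varphi_{\widehat{J}_1}$, reduce everything to showing $\alpha'\beta',\alpha'\beta'^{-1}\in\ker\varphi_{I_1}$ via Lemma~\ref{lem:FungGroupCalc}, and feed in Lemma~\ref{lemma:fundGp2ndRatBlDw} at the end; your reading of the characteristic sets $W(\mathcal{C}_1)$ and $W(\mathcal{C}_1')$ is also correct. But the gap is exactly where you feared, in the third paragraph, and it is not a verification issue but a false premise. The curve $E_{21}$ is the exceptional curve of the blow-up at $E_{20}\cap H$, and $H$ is the fourth sphere of $\mathcal{C}_2$; hence the circle $\gamma_0'=\partial(E_{21}/c)$ and the disk $E_{21}/c$ both pass through $V_2$ --- precisely the region already replaced by $\widehat{V}_2$ in forming $\widehat{F}_2$. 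So the band $\nu\gamma_0'$ does not survive intact into $\widehat{F}_2$, the extended surface $F_1'=F_1\cup\nu\gamma_0'$ is not a subsurface of $\widehat{F}_2$, and your claim that the construction is ``localized near $V_1$'' and ``unaffected by the replacement of $F$ by $\widehat{F}_2$'' fails. This cannot be sidestepped by a better choice: both closing $(-1)$-curves ($E_{14}$ for $\mathcal{C}_2$, $E_{21}$ for $\mathcal{C}_1$) meet a sphere of the \emph{other} chain, so no closing band is disjoint from the other blow-down region. Repairing your argument would require rerouting the $V_2$-portion of the band through the ribbon disk in $\widehat{V}_2$ and controlling the orientation character of the rerouted closed band, which you have not supplied.

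The paper avoids this by not introducing $E_{21}$ at this stage at all: it reuses the $E_{14}$-band from the previous proof, setting $l'_{\alpha}=\sigma_3\cap K_1$ and $l'_{\beta}=\sigma_4\cap K_1$ where $\sigma_3,\sigma_4$ are the components of $\sigma_1-V_1$, and it produces \emph{two} bands both lying in $(S^4-\widehat{V}_1)\cap\widehat{F}_2$: one is $\sigma_3\cup\sigma_2\cup\sigma_4$ (which passes through $\widehat{V}_2$, harmless since $\widehat{V}_2$ is disjoint from $\widehat{V}_1$), the other a neighborhood of an auxiliary arc joining $l'_{\alpha}$ to $l'_{\beta}$ elsewhere in $\widehat{F}_2$; their union is a M\"obius band, so one band kills $\alpha'\beta'$ and the other kills $\alpha'\beta'^{-1}$, both in $I_1$. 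This also exposes a second structural defect of your version: of your two bands, $M\cap\widehat{V}_1$ lies inside $\widehat{V}_1$ and therefore only yields a relation in $\widehat{J}_1$, not in $I_1$, so even granting the existence of your M\"obius band you would extract only one of the two required elements of $\ker\varphi_{I_1}$ from $M-\widehat{V}_1$.
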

\begin{proof}
  We use the objects and notation of the previous proof; we add
  $I_1=\pi _1((S^4-V_1)-F)$ $J_1=\pi _1(V_1-F)$ and
  ${\widehat{J}}_1= \pi _1 ({\widehat{V}}_1-{\widehat{F}})$.  As
  before, we have $G_1=\pi _1(S^3-K_1)$, and the fundamental groups
  $\pi _1(S^4-F)$ and $\pi _1 (S^4-\widehat{F})$ can be expressed
  as amalgamated products over $G_1$ similar to the previous
  case. (Notice that now the base point is chosen in $S^3-K_1$.)  The
  maps $\varphi _{J_1}$ and $\varphi _{{\widehat{J}}_1}$ are onto as
  before, hence we only need to deal with the kernels --- and indeed,
  the kernels of these maps can be identified exactly in the same way
  as before.

  For the band $\sigma_1$ from the previous argument, the part
  $\sigma_1 - V_1$ has two connected components; say $\sigma_3$ and
  $\sigma_4$, with $l_{\alpha}=\sigma_3\cap K_2$ and
  $l_{\beta}=\sigma_4\cap K_2$. Let us also set
  $l'_{\alpha }=\sigma_3\cap K_1$ and $l'_{\beta}=\sigma_4\cap K_1$.
  The normal circles $\alpha '$ and $\beta '$ to $l'_{\alpha}$ and
  $l'_{\beta}$ generate $\pi _1(S^3-K_1)$.

    As in the previous proof, we have that the subgroup of $G_1$
    generated by $\alpha'\beta'$ and $\alpha'\beta'^{-1}$ contains
    both $\ker\varphi_{J_1}$ and $\ker\varphi_{\widehat{J}_1}$. So to
    apply Lemma~\ref{lem:FungGroupCalc} again and to conclude the
    proof of the lemma, it is enough to show that $\ker \varphi
    _{I_1}$ contains both $\alpha'\beta'$ and $\alpha' \beta'^{-1}$.

    In these arguments we proceed as before: we will find two bands
    connecting $l'_{\alpha}$ and $l'_{\beta}$, so that their union is
    a M\"obius band, and the normal circles to these bands will
    provide the required homotopies.  Consider first the union
    $\sigma_3\cup\sigma_2\cup\sigma_4$.  We can connect $l'_{\alpha}$
    and $l'_{\beta}$ in $(S^4-\widehat{V}_1)\cap \widehat{F}_2$ so
    that the neighbourhood of this arc will provide the other
    band. These two bands now show that both $\alpha '\beta '$ and
    $\alpha \beta '^{-1}$ are in $\ker \varphi _{I_1}$, concluding the
    proof.
\end{proof}

\begin{proof}[Proof of Theorem~\ref{thm:main}]
  Corollary~\ref{cor:firstpart} verifies the first claim in Theorem~\ref{thm:main}.
  Rationally blowing down the chain $\mathcal{C}_2$ and then the chain $\mathcal{C}_1$,
  Lemma~\ref{lemma:fundGp2ndRatBlDw} and
  Lemma~\ref{lemma:fundGp1stRatBlDw} prove that
  $\pi _1(S^4-F)\cong\pi _1(S^4-\widehat{F}_2)\cong\pi _1(S^4-\widehat{F})$.
  Since  $\pi _1(S^4-F)\cong \ZZ_2$, the claim of the theorem follows at once.
\end{proof}

\section{ Seiberg-Witten calculation}
\label{sec:SW}

{
  In this section, using Seiberg-Witten theory, we verify
  that the four-manifold $\widehat{W}$
constructed by rationally blowing down the two Wahl chains in $\cpk
\# 22\cpkk$ is, indeed, not diffeomorphic to
$\cpk \# 4\cpkk$.
}

\begin{thm}
  The four-manifolds
  $\widehat{W}$ and $\cpk \# 4\cpkk$
  are not diffeomorphic.
  \end{thm}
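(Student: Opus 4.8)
The plan is to show that $\widehat{W}$ carries a nontrivial Seiberg--Witten basic class, while $\cpk \# 4\cpkk$ does not. Since $\cpk \# 4\cpkk$ admits a positive scalar curvature metric (it is a connected sum of manifolds with $b_2^+ = 1$ that carry such metrics, or more directly one uses that it admits a metric of positive scalar curvature because each summand does), all of its Seiberg--Witten invariants vanish; concretely, for a manifold with $b_2^+ = 1$ one must be slightly careful about chamber structure, but the vanishing of SW invariants in both chambers for $\cpk \# k\cpkk$ with $k \le 9$ is classical. Thus it suffices to exhibit one $\mathrm{Spin}^c$ structure on $\widehat{W}$ with nonzero Seiberg--Witten invariant (in the appropriate chamber), since a diffeomorphism would match up the SW invariants.

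The key input is that $\widehat{W}$ is symplectic: by Remark~\ref{rem:symplectic}, since the two Wahl chains $\mathcal{C}_1$, $\mathcal{C}_2$ can be made symplectic with orthogonal intersections inside the symplectic (indeed Kähler) surface $W = \cpk \# 22\cpkk$ — one first isotopes the configurations to be symplectic and then appeals to the symplectic rational blow-down of Symington and Park--Stipsicz — the rationally blown-down manifold $\widehat{W}$ inherits a symplectic structure. By Taubes' theorem, a closed symplectic four-manifold with $b_2^+ > 1$ has $SW(\pm K) = \pm 1$ for its canonical $\mathrm{Spin}^c$ structure, so it has a nontrivial basic class. However, $\widehat{W}$ has $b_2^+ = 1$, so instead I would invoke the $b_2^+ = 1$ version of Taubes' nonvanishing result (due to Taubes, with the chamber subtleties worked out by Li--Liu and others): for a symplectic four-manifold with $b_2^+ = 1$, the Seiberg--Witten invariant of the canonical $\mathrm{Spin}^c$ structure computed in the Taubes chamber (the one determined by the symplectic form, i.e. large perturbation) is $\pm 1$.

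The remaining step is to rule out $\widehat{W}$ being diffeomorphic to $\cpk \# 4\cpkk$ on these grounds. For manifolds with $b_2^+ = 1$ the Seiberg--Witten invariant depends on a chamber, so I need the wall-crossing formula together with the small-perturbation invariant: $\cpk \# 4\cpkk$ has vanishing small-perturbation SW invariants in both chambers for every $\mathrm{Spin}^c$ structure (since it has positive scalar curvature, the only possible nonzero contributions come from wall-crossing, and a direct check of the wall-crossing formula and dimension count shows these vanish for the relevant classes). On the other hand, $\widehat{W}$ is symplectic with $K \cdot \omega \neq 0$, and for such manifolds Taubes' argument shows the canonical class has nonzero small-perturbation SW invariant (this is where one uses that $\widehat{W}$ is not rational or ruled — equivalently that $K$ is not torsion and $K \cdot \omega > 0$, which I would verify from the explicit homology of $\widehat{W}$, noting $b_2^- = 4$ and the blow-down process keeps $K$ "big"). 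Comparing, the diffeomorphism invariant set of small-perturbation basic classes differs, a contradiction.

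The main obstacle I anticipate is the careful bookkeeping of chambers and the wall-crossing formula in the $b_2^+ = 1$ setting, and in particular verifying that the canonical class of $\widehat{W}$ gives a genuinely nonzero invariant rather than one that could be explained away by wall-crossing — this requires knowing that the expected dimension of the relevant moduli space is zero (i.e. $K^2 = 2\chi + 3\sigma$ holds, which it does for symplectic manifolds) and that $K^2 \ge 0$ or at least that the Taubes chamber coincides with the small-perturbation chamber, which follows once one checks $K \cdot \omega > 0$ and $\omega \cdot \omega > 0$. An alternative, cleaner route that sidesteps some of this is to appeal directly to the fact (used in \cite{R, RU}) that a symplectic four-manifold with $b_2^+ = 1$ which is not rational or ruled is not diffeomorphic to a rational surface; since $\cpk \# 4\cpkk$ is rational, and $\widehat{W}$ is symplectic with $K_{\widehat{W}}^2 = 5 - (\text{number of blow-ups involved}) $ computed to be such that $K$ is not anti-effective, $\widehat{W}$ cannot be rational, giving the non-diffeomorphism directly. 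I would present the Seiberg--Witten version as the primary argument and remark on this shortcut.
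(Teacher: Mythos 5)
Your overall strategy --- vanishing of the small-perturbation Seiberg--Witten invariants of $\cpk\#4\cpkk$ via positive scalar curvature, against a nonvanishing invariant on $\widehat{W}$ coming from its symplectic structure --- is the same as the paper's, and your accounting of the $b_2^+=1$ chamber structure is essentially right: since $K_{\widehat{W}}^2=2\chi+3\sigma=5>0$, the small-perturbation invariant of the canonical $\mathrm{Spin}^c$ structure is metric-independent, and \emph{if} $K_{\widehat{W}}\cdot[\omega]>0$ then that chamber is the Taubes chamber, where Taubes/Li--Liu gives $\pm1$. The genuine gap is that you never establish the hypothesis $K_{\widehat{W}}\cdot[\omega]>0$ (equivalently, in your alternative route, that $\widehat{W}$ is minimal, i.e.\ not symplectically rational); you defer it to ``the explicit homology of $\widehat{W}$'' and the remark that the blow-down ``keeps $K$ big.'' This is not a technicality --- it is the entire content of the theorem. $\cpk\#4\cpkk$ itself is simply connected, symplectic, with $b_2^+=1$ and $K^2=5>0$, and satisfies every soft hypothesis you invoke; the only difference is that its canonical class pairs \emph{negatively} with its symplectic forms, which places Taubes' nonvanishing in the chamber where wall-crossing cancels it. Many rational blow-downs of Wahl chains in blow-ups of $\cpk$ simply reproduce rational surfaces, so no configuration-independent argument can work. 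Moreover, computing $K_{\widehat{W}}\cdot[\omega_{\widehat{W}}]$ directly on $\widehat{W}$ is not straightforward, since the cohomology class of the symplectic form glued into the rational homology balls is not handed to you in an explicit basis of $H^2(\widehat{W})$.

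The paper supplies exactly this missing certificate, and does so on $W=\cpk\#22\cpkk$ rather than on $\widehat{W}$, where the intersection form is explicit: stretching the neck around the two chains and applying the gluing theorem gives $SW_{\widehat{W}}(\widehat{K})=SW_{W,g}(K)$, and the chamber of the stretched metric is certified by an explicit class $\alpha$ with $\alpha^2=10\geq 0$, $\alpha$ orthogonal to all spheres of $\mathcal{C}_1\cup\mathcal{C}_2$, $\alpha\cdot h=15>0$ and $K\cdot\alpha=-12<0$; since $K\cdot h=3>0$ and the invariant vanishes in the chamber of $h$, the wall-crossing formula yields $SW_{W,g}(K)=\pm1$. (The algebro-geometric proof in Section~\ref{sec:AG} establishes the same positivity by proving $K_{W'}$ ample via Nakai--Moishezon and the explicit discrepancies, which is what your ``alternative route'' would need.) Either of these computations, inserted at the point where you assert $K\cdot\omega>0$, would close your argument; without one of them it does not.
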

\begin{proof}
  We will use Seiberg-Witten invariants in the argument; below we will
  adapt the argument detailed in \cite[Section~3]{SSz}.

{
  Recall that for manifolds with $b_2^+>1$ the Seiberg-Witten
  invariants are diffeomorphism invariants, while for $b_2^+=1$ the
  invariants might depend on a chosen metric on the manifold and a
  perturbation term in the equations. By using small perturbation and
  considering manifolds with $b_2^-\leq 9$, however,
  this dependence disappears.  By
  the existence of a positive scalar curvature metric on $\cpk$ it
  easily follows that the (small perturbation) Seiberg-Witten
  invariants of $\cpk \# 4\cpkk$ are identically zero.

  Consider the homology class $K=3h-\sum $~(all exceptional
  divisors)~$=3h-\sum _{i=1}^{22} e_i$ (where we use the lower case letters to denote
  the homology classes represented by the exceptional divisors, and not by
  their potential strict transforms in further blow-ups) on
  $W=\cpk \# 22\cpkk$. Restrict this class to the complement of the
  neighbourhoods of the two chains ${\mathcal {C}}_1$ and ${\mathcal
  {C}}_2$, and extend it to $\widehat{W}$; denote the result (as
  well as its Poincar\'e dual) by $\widehat{K}$. Indeed, the class
  extends: (the Poincar\'e dual of) $K$ is the first Chern class of
  the standard symplectic structure on $W$, and as the rational
  blow-down of symplectic configurations is a symplectic operation
  \cite{PS, Sym}, not only the Chern class, but the symplectic form
  also extends; its first Chern class is (the Poincar\'e dual of)
  $\widehat{K}$.  Our goal is to show that
  $SW_{\widehat{W}}(\widehat{K})=\pm 1$.

  For this argument, we consider a metric $g$ on $W$ with the property
  that the neighbourhoods of ${\mathcal {C}}_1$ and ${\mathcal {C}}_2$
  are pulled away, that is, connected to their complement by a
  sufficiently long neck.  For such a metric, the gluing theorem of
  Seiberg-Witten solutions implies that
\[
  SW_{W, g}(K)=SW_{\widehat{W}}(\widehat{K}).
\]
  As noted above, the right-hand side is independent of the metric as
  we use small perturbation and we have $b_2^-(\widehat{W})=4<9$; the
  left-hand side, however, might depend on the choice of $g$, hence
  the appearance of the metric $g$ in the formula. To conclude the
  proof, we need to determine the Seiberg-Witten value in the chamber
  represented by the metric $g$ on $W$.

  The metric $g$ induces a period point (the unique self-dual 2-form
  with length 1 paring with the hyperplane class $h$ positively) which
  is in the chamber with homology classes orthogonal to the spheres in
  the two chains ${\mathcal {C}}_1$ and ${\mathcal {C}}_2$. As $K\cdot
  h =3>0$, the value of the Seiberg-Witten invariant in the chamber is
  decided by the sign of $K\cdot \alpha$ for a homology class $\alpha$
  which is
\begin{itemize}
\item of non-negative square,
\item orthogonal to the homologies of the spheres in
  ${\mathcal {C}}_1 \cup {\mathcal {C}}_2$,
\item $\alpha \cdot h>0$.
\end{itemize}
Indeed, as in the chamber containing $h$ the Seiberg-Witten invariant
vanishes, if $K\cdot \alpha<0$, we have (by the wall-crossing formula)
that $SW_{W,g}(K)=\pm 1$.
It is not hard to check that for the class
\begin{eqnarray*}
\alpha =
& 15h    -6e_1-5e_2 -3e_3-3e_4-4e_5-2e_6-6e_7-4e_8-4e_9-e_{10} \\
& -2e_{12}-e_{13}-e_{14}-2e_{15}-4e_{16}-3e_{18}-e_{19}-e_{20}-e_{21}-3e_{22}
\end{eqnarray*}
all the requirements above are satisfied, implying
$SW_{\widehat{W}}(\widehat{K})=\pm 1$. As $SW_{\cpk\#4\cpkk}\equiv 0$, this shows that the two four-manifolds are
not diffeomorphic, concluding the proof.} 
\end{proof}


{
\begin{rem} For completeness, the pairing $\alpha \cdot \alpha $ is equal
  to $10$ and $K\cdot \alpha$ is equal to $-12$, and obviously $\alpha
  \cdot h=15>0$. To check that $\alpha$ is, indeed,
  orthogonal to all the spheres in ${\mathcal {C}}_1$ and ${\mathcal {C}}_2$,
  below we list the homology classes of
  the spheres in ${\mathcal {C}}_1 \cup {\mathcal {C}}_2$ in
  the 'lower case' basis, the easy
  verification of orthogonality is left to the reader.
\end{rem}

The homology classes of the spheres in the blow-ups (in the basis
given by the exceptional divisors and not by their possible strict transforms,
that is, via our previous convention in lower case letters)
are given as follows:
\[
H = h-e_1-e_{18}-e_{19}-e_{20}-e_{21}-e_{22} \qquad L = h-e_7-e_8-e_9-e_{19}
\]
\[
F_1 = 3h-e_1-e_2-e_3-e_4-e_5-e_6-e_7-e_8-e_9-e_{13}-e_{14}-2e_{22}
\]
\[
E_1 = e_1-e_2-e_{10} \qquad E_3 = e_3-e_4-e_{11} \qquad E_4 = e_4-e_{12}-e_{13}
\]
\[
E_5 = e_5-e_6-e_{15} \qquad
E_6 = e_6-e_{15} \qquad
E_9 = e_9 - e_{16} - e_{17}
\]
\[
E_{13} = e_{13} - e_{14} \qquad
E_{19} = e_{19}-e_{20} \qquad
E_{20} = e_{20}-e_{21}
 \]
 \[
A = h-e_5-e_6-e_7-e_{18} \;
B = h-e_3-e_4-e_8-e_{12}-e_{18} \;
C = h-e_1-e_2-e_9-e_{17}
\]
\[
X = h-e_1-e_3-e_7 \qquad
Y = h-e_1-e_5-e_8-e_{10} \qquad
Z = h-e_3-e_5-e_9-e_{11}-e_{16}
\]

}

\section{ Proof of exoticness of $\widehat{W}$ via algebraic geometry}
\label{sec:AG}

We have Wahl chains $\mathcal{C}_1,\mathcal{C}_2$ in the complex projective surface $W=\cpk \# 22 \cpkk$. By Artin's contractibility criterion \cite[Theorem 2.3]{Art}, there is a normal projective surface $W'$ and a birational morphism $\sigma \colon W \to W'$ which contracts $\mathcal{C}_1$ and $\mathcal{C}_2$ into two singular points $P, Q \in W'$. This morphism induces an isomorphism between $W \setminus{\mathcal{C}_1 \cup \mathcal{C}_2}$ and $W' \setminus{P,Q}$. In this section, we give a proof of exoticness of $\widehat{W}$ via algebraic properties of $W'$. This is based on \cite[Section 1]{RU}.

Let us first recall some definitions from algebraic geometry to be
applied to our particular situation. A Wahl singularity is a
2-dimensional cyclic quotient singularity of type
$\frac{1}{p^2}(1,pq-1)$, where gcd$(p,q)=1$.
For a complex surface
$W'$ with only Wahl singularities, we consider its minimal resolution
$\sigma \colon W \to W'$, where $W$ is a nonsingular complex
projective surface and $\sigma$ is a birational morphism. The
pre-image of each Wahl singularity is a Wahl chain $\mathcal{C}_j=\cup_i C^j_i$, where
the $C_i^j$s are 2-spheres. The discrepancy $d(C_i^j)$ of $C_i^j$ is a
rational number such that 
$$K_W \equiv \sigma^*(K_{W'}) + \sum_j \sum_i d(C^j_i) \, C^j_i,$$
(see e.g. \cite[Definition 2.22]{KM}), where $K_W$, $K_{W'}$ are
canonical divisors, the symbol $\equiv$ is numerical equivalence of
divisors, and the sum runs through all the Wahl chains $\mathcal{C}_j$
over Wahl singularities with $C_i^j$ the $i^{{\rm {th}}}$ sphere in
the Wahl chain $\mathcal{C}_j$.  One can show that
$-1<d(C^j_i)<0$. To compute these numbers one uses the adjunction
formula and solves the linear system
$$-(C_k^j\cdot C_k^j)-2= \sum_i d(C_i^j) \, (C_i^j \cdot C_k^j), \ \ \text{for
  all} \ k,$$ for each Wahl chain. This system has a unique solution
since the intersection matrix is negative definite \cite{mum}. The
canonical divisor $K_{W'}$ is $\mathbb{Q}$-Cartier, and so there
is $u\in \NN$ such that $uK_{W'}$ is a Cartier divisor. This integer $u$ can
be taken as the least common multiple of the indices $p$ of the singularities
$\frac{1}{p^2}(1,pq-1)$ in $W'$. The canonical class $K_{W'}$ is \emph{ample}
if there is an integer $v\in \NN$ so that the sections of the line bundle
associated to $uvK_{W'}$ define an embedding of $W'$ into some
projective space. 

We now state the result we need from \cite{RU} to prove exoticness.

\begin{thm}\label{thm:algebraic}(Theorem 2.3 and Corollary 2.5 of \cite{RU})
Let $W'$ be a complex projective surface with only Wahl singularities such that $K_{W'}$ is ample. Then, the rational blow-down $\widehat{W}$ of the Wahl chains $\cup_j \mathcal{C}_j$ in the minimal resolution $W$ of $W'$ is a (smoothly) minimal symplectic 4-manifold. Moreover, if $\widehat{W}$ is simply-connected and it has an odd intersection form, then $\widehat{W}$ is an exotic
$$(2p_g(W)+1) \cpk \; \# \; (10p_g(W)+9-K_{W'}^2) \cpkk $$
with $10p_g(W)+9-K_{W'}^2>0$. Thus if $W$ is rational (e.g. it is some blow-up of $\cpk$), then $\widehat{W}$ is a minimal exotic $\cpk \# (9-K_{W'}^2) \cpkk$.
\end{thm}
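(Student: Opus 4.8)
The plan is to prove Theorem~\ref{thm:algebraic} in four moves: equip $\widehat W$ with a symplectic form and identify its canonical class; establish minimality; read off the homeomorphism type; and deduce exoticness.

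\textbf{Symplectic structure and canonical class.} Since $W$ is a smooth projective surface it carries a K\"ahler form $\omega$, and each Wahl chain $\mathcal C_j=\cup_i C^j_i$ is a negative-definite linear plumbing of smooth rational curves; after a compactly supported isotopy the $C^j_i$ may be taken to meet $\omega$-orthogonally, so the symplectic rational blow-down construction (see \cite{PS, Sym}; cf.~Remark~\ref{rem:symplectic}) endows $\widehat W$ with a symplectic form $\widehat\omega$ agreeing with $\omega$ away from the chains, i.e.\ on $W_0:=W\setminus\bigcup_j\mathcal C_j$. The discrepancy identity $K_W\equiv\sigma^*K_{W'}+\sum_{j,i}d(C^j_i)\,C^j_i$ shows $\sigma^*K_{W'}$ is numerically orthogonal to every $C^j_i$ and restricts on $W_0$ to $K_W|_{W_0}$; hence this class extends over the rational homology balls $B_{p_j,q_j}$ (uniquely over $\mathbb Q$, the lens-space boundaries having finite $H^1$) to the symplectic canonical class $\widehat K\in H^2(\widehat W;\mathbb Q)$ of $(\widehat W,\widehat\omega)$ --- the same extension phenomenon used in Section~\ref{sec:SW}, valid because the rational blow-down of a symplectic configuration is symplectic. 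One checks $\widehat K^2=K_{W'}^2$ (the correction terms being supported on the contracted chains), so ampleness of $K_{W'}$ gives $K_{W'}^2>0$ and $\widehat K\cdot[\widehat\omega]>0$.

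\textbf{Minimality --- the main obstacle.} Suppose $\widehat W$ were not minimal. By the structure theory of symplectic $4$-manifolds (Taubes for $b_2^+>1$, Li--Liu for $b_2^+=1$) there is then a class $E\in H_2(\widehat W;\mathbb Z)$ with $E^2=-1$, $\widehat K\cdot E=-1$, $E\cdot[\widehat\omega]>0$, represented by an embedded $\widehat\omega$-symplectic sphere; for a generic $\widehat\omega$-tame almost-complex structure $J$, chosen to coincide with the honest complex structure on a neighborhood of $\bigcup_j\mathcal C_j$ where $\widehat\omega$ is K\"ahler, Taubes' theory realizes $E$ by a (possibly nodal) $J$-holomorphic curve $\hat D$. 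Each $B_{p_j,q_j}$ is a $\mathbb Q$-homology ball, so $[\widehat\omega]|_{B_{p_j,q_j}}$ is exact and no component of $\hat D$ can lie inside a $B_{p_j,q_j}$; the remaining components lie in $W_0\subset W'\setminus\{P,Q\}$, and pushing their closures forward through $\sigma$ produces a non-zero effective $1$-cycle $D'$ on $W'$ with $K_{W'}\cdot D'=\sigma^*K_{W'}\cdot E=\widehat K\cdot E=-1<0$, contradicting the ampleness of $K_{W'}$ by Kleiman's criterion. I expect this to be the hard step: the cohomological bookkeeping for the last chain of equalities is formal (it uses the isomorphism $H^2(\widehat W;\mathbb Q)\cong(\operatorname{span}\{C^j_i\})^{\perp}\subset H^2(W;\mathbb Q)$, under which $\widehat K$ corresponds to $\sigma^*K_{W'}$), but rigorously confining the pseudoholomorphic $(-1)$-curve away from the rational homology balls across their lens-space boundaries, and checking that what is left compactifies to an honest effective cycle on $W'$ of the asserted canonical degree, is the delicate analytic point.

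\textbf{Homeomorphism type.} Because $(\widehat W,\widehat\omega)$ carries a symplectic form pairing positively with its canonical class, $\widehat W$ is neither rational nor ruled (automatic when $b_2^+>1$, and by Liu's classification when $b_2^+=1$); in particular $b_2^-(\widehat W)>0$, since otherwise simple connectivity together with the odd intersection form would make $\widehat W$ homeomorphic to a connected sum of copies of $\cpk$, which by Taubes' theory no symplectic manifold is. Rational blow-down does not change $b_2^+$, so $b_2^+(\widehat W)=b_2^+(W)=2p_g(W)+1$ ($W$ being K\"ahler); and, $\widehat K$ being the symplectic canonical class of the simply connected $\widehat W$, we have $K_{W'}^2=\widehat K^2=4+5\,b_2^+(\widehat W)-b_2^-(\widehat W)$ (from $c_1^2=2\chi+3(b_2^+-b_2^-)$ and $\chi=2+b_2$), hence $b_2^-(\widehat W)=10p_g(W)+9-K_{W'}^2$. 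Freedman's theorem \cite{Fr} then identifies $\widehat W$ with $(2p_g(W)+1)\,\cpk\;\#\;(10p_g(W)+9-K_{W'}^2)\,\cpkk$, which for $W$ rational ($p_g(W)=0$) is $\cpk\#(9-K_{W'}^2)\cpkk$.

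\textbf{Exoticness.} If $p_g(W)\ge1$ then $b_2^+(\widehat W)=2p_g(W)+1\ge3$, so $SW_{\widehat W}(\pm\widehat K)=\pm1$ by Taubes, whereas the standard $(2p_g(W)+1)\,\cpk\#b\,\cpkk$, being a connected sum of pieces with $b_2^+>0$, has identically vanishing Seiberg--Witten invariants; hence $\widehat W$ is exotic. If $p_g(W)=0$ --- in particular when $W$ is a blow-up of $\cpk$ --- then $\widehat W$ is a minimal symplectic $4$-manifold, whereas the standard $\cpk\#(9-K_{W'}^2)\cpkk$ with $9-K_{W'}^2\ge1$ is non-minimal, so $\widehat W$ is again exotic, and indeed a minimal exotic $\cpk\#(9-K_{W'}^2)\cpkk$, as claimed.
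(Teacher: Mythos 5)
Your outline (symplectic structure via \cite{PS, Sym}, canonical class $\widehat K$ extending $\sigma^*K_{W'}$, Freedman for the homeomorphism type, minimality versus non-minimality for exoticness when $p_g=0$, Taubes when $p_g\geq 1$) matches the strategy of \cite{RU}, which is what the paper cites for this theorem; the arithmetic $b_2^-(\widehat W)=10p_g(W)+9-K_{W'}^2$ and the exoticness step are fine. The problem is the central minimality step, and the gap is exactly where you flag it. Knowing that $[\widehat\omega]$ is exact on each $B_{p_j,q_j}$ only excludes closed components of the $J$-holomorphic representative $\hat D$ that lie \emph{entirely} inside a rational homology ball; an irreducible closed component may perfectly well meet both $B_{p_j,q_j}$ and $W_0$. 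For such a component $\sigma$ is simply not defined on the part inside $B_{p_j,q_j}$ (that region is glued in and is not part of $W$ or $W'$ at all), the closure in $W'$ of the $W_0$-part is a curve with boundary rather than a cycle, and the chain of equalities $K_{W'}\cdot D'=\sigma^*K_{W'}\cdot E=\widehat K\cdot E$ collapses. Forcing the representative entirely into $W_0$ would require a neck-stretching/SFT-type argument along the lens spaces $L(p_j^2,p_jq_j-1)$, which you do not supply; choosing $J$ integrable ``near the chains'' does not help because the chains are no longer present in $\widehat W$. So as written the minimality proof does not close.

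The argument of \cite[Theorem 2.3]{RU}, sketched in the paper just after the theorem statement, avoids this entirely by a different choice of symplectic form: one polarizes $W$ by a very ample class of the shape $\sigma^*(NK_{W'})-D$ with $N\gg 0$ and $D$ effective and supported on the contracted chains, so that the symplectic form produced by the rational blow-down lies in the class $N\widehat K$ with $N>0$. Then a symplectically embedded $(-1)$-sphere $E$ would satisfy both $\widehat K\cdot E=-1$ (adjunction) and $N\,\widehat K\cdot E=[\widehat\omega]\cdot E>0$, an immediate contradiction; the only nontrivial external input is the Taubes/Li--Liu equivalence of smooth and symplectic minimality, which you also invoke. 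In short: you reduced to the same point (a symplectic $(-1)$-sphere) but, lacking the proportionality $[\widehat\omega]\in\mathbb{R}_{>0}\cdot\widehat K$, you are forced into a pushforward-to-$W'$ argument that has a real hole. A secondary, smaller point: your proof that $b_2^-(\widehat W)>0$ rules out $\#n\,\cpk$ for $n\geq 2$ by vanishing of Seiberg--Witten invariants, but the case $n=1$ (a symplectic manifold homeomorphic to $\cpk$ with $K\cdot\omega>0$) needs the $b_2^+=1$ classification results of Li--Liu/Liu explicitly, not just ``Taubes' theory''.
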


Informally, singular surfaces in the
Koll\'ar--Shepherd-Barron--Alexeev compactification of the moduli
space of complex surfaces of general type (the analogue to the
Deligne-Mumford compactification of the moduli space of compact
Riemann surfaces of genus $g\geq2$) with only Wahl singularities
produce minimal exotic 4-manifolds.

The idea of the proof is to give a symplectic structure on
$\widehat{W}$ via the embedding of $W$ into a projective space defined
by a very ample line bundle of the form $\sigma^*(NK_{W'})-D$ where
$N\gg 0$ and $D$ is an effective sum of curves in the Wahl chains
contracted by $\sigma$. The symplectic form will be then in the class
of $N K_{W'}$. Thus the ampleness of $K_{W'}$ rules out the existence
of $(-1)$ 2-spheres in $\widehat{W}$. See the proof of \cite[Theorem
  2.3]{RU} for details.
\vspace{0.3cm}

We now return to our particular construction. Recall that we
considered the following pair of chains $\mathcal{C}_1,\mathcal{C}_2$
of equivariant curves in $W=\cpk \# 22 \cpkk$. We include the
intersection numbers and also the discrepancies to be used in the proof of
Theorem \ref{thm:nondiffeo}:
\[\begin{array}{*{19}c}
    \mathcal{C}_1 & \colon & B & - & C & - & A & - & E_6 & - & F_1 & - & E_9 & - & L & - & E_{19} & - & E_{20}\\
    \text{self-int}& \colon & -4 && -3 && -3 && -2 && -6 && -3 && -3 && -2 && -2\\
    \text{discr} & \colon & -\frac{47}{65} && -\frac{58}{65} && -\frac{62}{65} && -\frac{63}{65} && -\frac{64}{65} && -\frac{61}{65} && -\frac{54}{65} && -\frac{36}{65} && -\frac{18}{65},
\end{array}\]
and
\[\begin{array}{*{19}c}
    \mathcal{C}_2 & \colon & Y & - & E_5 & - & Z & - & H & - & E_1 & - & X & - & E_3 & - & E_4 & - & E_{13}\\
    \text{self-int}& \colon & -3 && -3 && -4 && -5 && -3 && -2 && -3 && -3 && -2\\
    \text{discr} & \colon & -\frac{49}{79} && -\frac{68}{79} && -\frac{76}{79} && -\frac{78}{79} && -\frac{77}{79} && -\frac{74}{79} && -\frac{71}{79} && -\frac{60}{79} && -\frac{30}{79}.
\end{array}\]

We will prove here the following:

\begin{thm}\label{thm:nondiffeo2}
The symplectic 4-manifold $\widehat{W}$ obtained by rationally blowing
down the two disjoint Wahl chains $\mathcal{C}_1$ and $\mathcal{C}_2$
is minimal.
\end{thm}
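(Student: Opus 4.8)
The plan is to reduce the statement entirely to the ampleness of $K_{W'}$ and then invoke Theorem~\ref{thm:algebraic}. Indeed, $W=\cpk\#22\cpkk$ is rational and, by the Artin contraction $\sigma\colon W\to W'$ recalled above, $\widehat{W}$ is the rational blow-down of the two Wahl chains $\mathcal{C}_1,\mathcal{C}_2$ in the minimal resolution $W$ of $W'$; so, once we know $K_{W'}$ is ample, the first assertion of Theorem~\ref{thm:algebraic} gives at once that $\widehat{W}$ is a (smoothly) minimal symplectic $4$-manifold, which is exactly Theorem~\ref{thm:nondiffeo2}. To prove that $K_{W'}$ is ample on the normal projective surface $W'$ I would use the Nakai--Moishezon criterion: it suffices to show that $K_{W'}^2>0$ and that $K_{W'}\cdot C>0$ for every irreducible curve $C\subset W'$.

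For the self-intersection, write $\sigma^*K_{W'}=K_W-\sum_j\sum_i d(C_i^j)\,C_i^j$ as in the excerpt. Since $\sigma^*K_{W'}$ is orthogonal to every component of the chains, one has $K_{W'}^2=(\sigma^*K_{W'})^2=(\sigma^*K_{W'})\cdot K_W=K_W^2-\sum_j\sum_i d(C_i^j)\,(K_W\cdot C_i^j)$. Here $K_W^2=9-22=-13$, and by adjunction $K_W\cdot C_i^j=-(C_i^j)^2-2$, which for the listed self-intersections gives the weights $(2,1,1,0,4,1,1,0,0)$ along $\mathcal{C}_1$ and $(1,1,2,3,1,0,1,1,0)$ along $\mathcal{C}_2$. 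Pairing these with the tabulated discrepancies $-\tfrac{1}{65}(47,58,62,63,64,61,54,36,18)$ and $-\tfrac{1}{79}(49,68,76,78,77,74,71,60,30)$, a short computation shows that $-\sum_i d(C_i^j)(K_W\cdot C_i^j)$ equals $9$ for each of the two chains, hence $K_{W'}^2=-13+9+9=5>0$. (This is consistent with the prediction $\widehat{W}\cong\cpk\#(9-K_{W'}^2)\cpkk=\cpk\#4\cpkk$ of Theorem~\ref{thm:algebraic}.)

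For the curve condition, pull back to $W$: by the projection formula $K_{W'}\cdot C=(\sigma^*K_{W'})\cdot\widetilde{C}=(K_W+\Delta)\cdot\widetilde{C}$, where $\widetilde{C}$ is the strict transform of $C$ --- so $\widetilde{C}$ runs over all irreducible curves of $W$ that are not components of $\mathcal{C}_1\cup\mathcal{C}_2$ --- and $\Delta=-\sum_j\sum_i d(C_i^j)\,C_i^j$ is an effective $\QQ$-divisor supported on the two chains whose coefficients all lie in $(0,1)$. Thus the point is to show that every curve on which $K_W$ is non-positive (in particular every $(-1)$-curve, line, conic or fibre component on this $22$-fold blow-up of $\cpk$) meets $\mathcal{C}_1\cup\mathcal{C}_2$ enough for the positive contribution of $\Delta$ to dominate. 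I would handle this by a finite reduction: the effective cone of $W$ is generated by the explicit curve configuration coming from the pencil (the eight lines $X,Y,Z,A,B,C,H,L$, the conic $Q$, the $22$ exceptional curves, and the components of the singular fibres of the elliptic fibration on the intermediate surface $S$), so it is enough to verify strict positivity of $(K_W+\Delta)\cdot\widetilde{C}$ for the finitely many generators of $\overline{NE}(W)$ not contracted by $\sigma$; with the discrepancies already determined, this is a direct (computer-assisted) check --- indeed it is exactly the positivity condition certified by the search underlying the construction.

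The arithmetic of the two displayed identities is routine; the real content, and the step I expect to be the main obstacle, is the curve--positivity statement, i.e. proving that $\sigma^*K_{W'}=K_W+\Delta$ is nef and vanishes only on the components of the two chains. Granting that, Nakai--Moishezon yields that $K_{W'}$ is ample, and Theorem~\ref{thm:algebraic} then shows $\widehat{W}$ is minimal, completing the proof of Theorem~\ref{thm:nondiffeo2}.
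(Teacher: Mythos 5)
Your reduction to the ampleness of $K_{W'}$ via Theorem~\ref{thm:algebraic} and Nakai--Moishezon is exactly the paper's strategy, and your computation of $K_{W'}^2=5$ from the discrepancies and adjunction is correct. The problem is the step you yourself flag as the main obstacle: you propose to verify $(K_W+\Delta)\cdot\widetilde{C}>0$ only on a finite list of curves, on the grounds that ``the effective cone of $W$ is generated by'' the eight lines, the conic $Q$, the $22$ exceptional curves and the fibre components. That claim is not justified and cannot simply be asserted: $W$ is a $22$-fold blow-up of $\cpk$, and for blow-ups of $\cpk$ at nine or more points the Mori cone is in general not rational polyhedral and typically contains infinitely many $(-1)$-curve classes; even with the points in the special position dictated by the cubic pencil, identifying a finite generating set for $\overline{NE}(W)$ is itself a hard theorem, not a bookkeeping remark. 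Without it, your ``finite computer-assisted check'' does not rule out some other irreducible curve $\widetilde{\Gamma}$ with $K_W\cdot\widetilde{\Gamma}\le 0$ meeting the chains too little. So as written the argument has a genuine gap precisely at the curve-positivity statement.

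The paper closes this gap by a different and fully finite argument that you should compare with your plan. One writes out $\sigma^*K_{W'}=K_W+\Delta$ explicitly, using that $-K_S$ is a fibre of the elliptic fibration, and observes that the result is a $\QQ$-effective divisor with \emph{known support}: the chain components together with a specific collection of $(-1)$-curves, and crucially this support contains all the components $X, E_1, E_{10}, Y, E_5, E_{15}, Z, E_{16}, E_{11}, E_3$ of an entire singular fibre. Then for any irreducible $\Gamma\subset W'$ with $K_{W'}\cdot\Gamma\le 0$, its strict transform $\widetilde{\Gamma}$ either lies in that support --- giving the short explicit list $E_{10},E_{11},E_{12},E_{14},E_{15},E_{16},E_{17},E_{18},E_{21},E_{22}$, each checked directly using $K_W\cdot E=-1$ and the discrepancies --- or is disjoint from the support; but a curve disjoint from a full fibre must be a component of some other singular fibre, and the only remaining candidate is $Q$, which visibly meets the support. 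No statement about the structure of $\overline{NE}(W)$ is needed. If you replace your effective-cone reduction by this ``support contains a fibre'' argument, the rest of your write-up goes through.
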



\begin{proof}
We have shown already in Section~\ref{sec:construction}  that $\widehat{W}$ is simply connected. Here we will apply Theorem \ref{thm:algebraic}, and so we only need to check that $K_{W'}$ is ample. To verify ampleness, by the
Nakai-Moishezon criterion (see e.g. \cite[Theorem 1.37]{KM}),
we need to prove that $K_{W'} \cdot \Gamma > 0$ for every
irreducible curve $\Gamma$ in $W'$. (We already know that
$K_{W'}^2=5>0$.) We remark that $W'$ is a projective surface with
two Wahl singularities $\frac{1}{65^2}(1,65\cdot18-1)$ for
$\mathcal{C}_1$ and $\frac{1}{79^2}(1,79 \cdot 30-1)$ for
$\mathcal{C}_2$. Let $\sigma \colon W \to W'$ be the contraction
as above. In what follows, we will use numerical
equivalence $\equiv$ for divisors, as we only need to check intersection numbers.

Let $S \to \mathbb{C} \mathbb{P}^1$ be the elliptic surface found in Section~\ref{sec:construction} to construct $W$. First, we know that $-K_S$ is linearly equivalent to a fiber, and so we have $$K_S \equiv -\frac{1}{3}(X + E_1 + Y + E_5 + Z + E_3) -
\frac{1}{3}(A + B + C) - \frac{1}{3}F_1.$$ Using the pull-back formula
for the canonical class after a blow-up, we obtain (using the previous
equation) that \begin{align*} K_W & \equiv -\frac{1}{3}(X + E_1 + Y +
  E_5 + Z + E_3 + A + B + C + F_1) \\ &\quad + \frac{2}{3}(E_{16} +
  E_{15} + E_{17} + E_{12})+ \frac{1}{3}(E_{11} + E_{10} + E_{18} +
  E_{22}) \\ &\quad + E_{19} + 2 E_{20} + 3 E_{21} + \frac{2}{3}
  E_{13} + \frac{4}{3} E_{14}.
\end{align*}
On the other hand, by the definition of discrepancy of exceptional divisors, we have the equality \begin{align*}
    \sigma^*K_{W'} &\equiv K_W + \frac{47}{65}B + \frac{58}{65}C +
    \frac{62}{65}A + \frac{63}{65}E_6 + \frac{64}{65}F_1 +
    \frac{61}{65}E_9 + \frac{54}{65}L \\ &\quad + \frac{36}{65}E_{19}
    + \frac{18}{65} E_{20} + \frac{49}{79}Y + \frac{68}{79}E_5 +
    \frac{76}{79}Z + \frac{78}{79}H + \frac{77}{79}E_1 \\ &\quad +
    \frac{74}{79}X + \frac{71}{79}E_3 + \frac{60}{79}E_4 +
    \frac{30}{79}E_{13}.
\end{align*}
Therefore, by plugging in $K_W$ in the last formula, we obtain that
$\sigma^*K_{W'}$ can be written as a sum of curves with positive rational coefficients (i.e. it is a $\mathbb Q$-effective divisor). We fix this representative of $\sigma^*K_{W'}$, and we refer to these curves as the support of $\sigma^*K_{W'}$.
To check that
$K_{W'}$ is ample, suppose $\Gamma \subseteq W'$ is a curve such that
$K_{W'}\cdot\Gamma \leq 0$. Let $\tilde\Gamma$ be its strict transform
in $W$. Then
\[K_{W'}\cdot\Gamma = \sigma^*K_{W'}\cdot\sigma^*\Gamma = \sigma^*K_{W'} \cdot \tilde\Gamma.\] Since $\sigma^*K_{W'}$ can be written as a $\mathbb Q$-effective divisor, any curve not included in its support must intersect $\sigma^*K_{W'}$ non-negatively. Moreover, $\tilde
\Gamma$ is a strict transform of a curve in $W'$, and so it is not contracted by $\sigma$. Thus  $\tilde
\Gamma$ is either one of the non-contractible curves in the support of $\sigma^*K_{W'}$, or it does not
intersect this support at all.

In the first case, $\tilde\Gamma$ can be one of the following curves:
$E_{11}$, $E_{16}$, $E_{15}$, $E_{10}$, $E_{17}$, $E_{12}$, $E_{18}$,
$E_{22}$, $E_{21}$ or $E_{14}$. Since all of them are $(-1)$-curves in
$W$, their intersection with $K_W$ is $-1$, and so we obtain
\begin{align*}
    \sigma^*K_{W'} \cdot E_{11} &= -1 + \frac{76}{79} + \frac{71}{79}
    > 0, & \sigma^*K_{W'} \cdot E_{16} &= -1 + \frac{76}{79} +
    \frac{61}{65} > 0,\\ \sigma^*K_{W'} \cdot E_{15} &= -1 +
    \frac{68}{79} + \frac{63}{65} > 0, & \sigma^*K_{W'} \cdot E_{10}
    &= -1 + \frac{49}{79} + \frac{77}{79} > 0,\\ \sigma^*K_{W'} \cdot
    E_{17} &= -1 + \frac{58}{65} + \frac{61}{65} > 0, & \sigma^*K_{W'}
    \cdot E_{12} &= -1 + \frac{47}{65} + \frac{60}{79} >
    0,\\ \sigma^*K_{W'} \cdot E_{18} &= -1 + \frac{47}{65} +
    \frac{62}{65} > 0, & \sigma^*K_{W'} \cdot E_{22} &= -1 +
    \frac{64}{65} + \frac{64}{65} > 0,\\ \sigma^*K_{W'} \cdot E_{21}
    &= -1 + \frac{18}{65} + \frac{78}{79} > 0, & \sigma^*K_{W'} \cdot
    E_{14} &= -1 + \frac{30}{79} + \frac{64}{65} > 0.\\
\end{align*}

This leaves with the case that $\tilde\Gamma$ does not intersect the
support of $\sigma^*K_{W'}$. But this is impossible as this support
contains $X$, $E_1$, $E_{10}$, $Y$, $E_5$, $E_{15}$, $Z$, $E_{16}$,
$E_{11}$, $E_3$, and these curves form a fiber of the induced elliptic
fibration $W \to \mathbb C \mathbb P^1$. But then $\tilde\Gamma$ must
be a component of some other singular fiber, and the only possibility
would be $Q$ (see Figure \ref{22up}), whose intersection with
$\sigma^*K_{W'}$ is not zero. Therefore such $\tilde\Gamma$ does not
exist, and so $K_{W'}$ is ample.
\end{proof}

\begin{rem}
As $K_{W'}$ is ample, the surface $W'$ lives in the
Koll\'ar--Shepherd-Barron--Alexeev moduli space of surfaces with
$K^2=5$ and $p_g=q=0$. The local-to-global obstruction for complex
deformations of $W'$ is in $H^2(W',T_{W'})$, and by \cite[\S 4]{RU}
one can compute that $H^2(W',T_{W'})=\mathbb C^2$ (almost the same
proof as for \cite[Proposition 5.4]{RU}). Therefore it is not trivial
to decide whether there is a complex ($\mathbb Q$-Gorenstein)
smoothing of $W'$ or not. If so, then this smoothing would produce
simply connected surfaces of general type with $K^2=5$ and
$p_g=0$. There are no such examples in the literature, this is an open problem. We also have
$H^1(W',T_{W'})=0$ by \cite[Theorem 4.1]{RU}, and so $W'$ is
equisingularly rigid.
\end{rem}

\end{document}